\def\id{\operatorname{id}}
\def\cA{\mathcal A}
\def\cC{\mathcal C}
\def\ob{\mathrm{ob}}
\def\obc{\ob\cC}
\def\cD{\mathcal D}
\def\cF{\mathcal F}
\def\cS{\mathcal S}
\newcommand{\Z}{\mathbb{Z}}
\newcommand{\N}{\mathbb{N}}
\newcommand{\im}{\operatorname{im}}
\def\coker{\operatorname{coker}}
\def\colim{\operatornamewithlimits{colim}}
\numberwithin{equation}{section}
\theoremstyle{plain}
\newtheorem{thm}[equation]{Theorem}
\newtheorem{cor}[equation]{Corollary}
\newtheorem{lem}[equation]{Lemma}
\newtheorem{prop}[equation]{Proposition}
\newcommand{\comment}[1]{}  
\theoremstyle{definition}
\newtheorem{defn}[equation]{Definition}
\theoremstyle{remark}
\newtheorem{rem}[equation]{Remark}
\newtheorem{ex}[equation]{Example}
\begin{document}

\bibliographystyle{plain}

\title{On the K-theory of $\mathbb{Z}$-categories.}
\author{Eugenia Ellis}
\email{eellis@fing.edu.uy}
\address{IMERL, Facultad de Ingenier\'\i a, Universidad de la Rep\'ublica \\
Julio Herrera y Reissig 565, 11.300, Montevideo, Uruguay}
\author{Rafael Parra}
\email{rparra@fing.edu.uy}
\address{IMERL, Facultad de Ingenier\'\i a, Universidad de la Rep\'ublica \\
Julio Herrera y Reissig 565, 11.300, Montevideo, Uruguay}

\begin{abstract}
We establish connections between the concepts of Noetherian, regular coherent, and regular $n$-coherent categories for $\Z$-linear categories with finitely many objects and the corresponding notions for unital rings. 
These connections enable us to obtain a negative $K$-theory vanishing result, a fundamental theorem, and a homotopy invariance result for the $K$-theory of $\Z$-linear categories. 
\end{abstract}

\maketitle

\section{Introduction}

Let $R$ be an associative ring with a unit.  The fundamental theorem in K-theory also known as the Bass-Heller-Swan theorem, expresses the K-groups of $R[t,t^{-1}]$ in terms of the K-groups and Nil-groups of $R$
$$
K_{i}(R[t,t^{-1}])\simeq K_{i-1}(R) \oplus K_{i}(R) \oplus \operatorname{Nil}_{i-1}(R)\oplus \operatorname{Nil}_{i-1}(R).
$$
The groups $\operatorname{Nil}_i(R)$ for $i\in\mathbb{Z}$, and the $K$-groups $K_i(R)$ for $i < 0$, are known to vanish when $R$ is right regular (i.e. right Noetherian and right regular coherent).  Swan \cite{Swan} proved that $\operatorname{Nil}_i(R)$ also vanishes when $R$ is right regular coherent and $i\geq 0$, using Quillen's resolution and devissage theorems as the main tools. In \cite{ep}, we extended the study to $n$-coherent rings, where $n\geq 0$ (here 1-coherent ring is the same as coherent ring, and 0-coherent ring is the same as Noetherian ring). We derived a new expression for $\operatorname{Nil}_i(R)$ for a $n$-regular and $n$-coherent ring $R$, but its vanishing status remains unknown. Our current focus is on exploring various methods for computing these groups.

The algebraic K-theory of a ring with a unit can be generalized to categories that have additional structure, and even to non-unital rings. In the context of K-theory, it is often more convenient to use additive categories instead of rings. With this motivation in mind, Bartels and L\"uck extended the notions of regularity and regular coherence to additive categories. In \cite{bl} they proved the following result:

\begin{thm}\cite[Corollary 12.2]{bl} Let  $\mathcal{C}$ be an additive category which is regular. Then $K_i( \mathcal{C})=0$ for all $i\leq -1$.
\end{thm}

The focus of this paper is to extend the notion of regular $n$-coherence and some vanishing results in K-theory from rings to $\Z$-linear categories. Let $\cC$ be a $\Z$-linear category. We define a right $\cC$-module as a contravariant $\Z$-linear functor $F: \cC^{op} \rightarrow{\operatorname{Ab}}$.  We denote the category of right $\mathcal{C}$-modules as $\operatorname{Fun}(\mathcal{C}^{op}, \operatorname{Ab})$. Using the Yoneda lemma, we embed $\cC$ into $\operatorname{Fun}(\cC^{op}, \operatorname{Ab})$ with the purpose of using homological constructions in $\operatorname{Fun}(\cC^{op}, \operatorname{Ab})$  which a priori make no sense in  $\cC$. The finiteness conditions for $\operatorname{Fun}(\mathcal{C}^{op}, \operatorname{Ab})$ are defined in \cite{bgp}.  As the category of $R$-modules, $\operatorname{Fun}(\cC^{op}, \operatorname{Ab})$ is a Grothendieck category with a generating set of finitely generated projective objects. A right $\cC$-module $F$ is said to be of type $\mathcal{FP}_{n}$ if and only if there exists an exact sequence 
$$ P_{n} \rightarrow \cdots \rightarrow P_{1} \rightarrow P_{0} \xrightarrow \ F \rightarrow 0$$
where $P_{i}$ is a finitely generated and projective right $\cC$-module for every $0\leq{i}\leq{n}$. A right $\cC$-module $F$ is of type $\mathcal{FP}_{\infty}$ if it is of type $\mathcal{FP}_{n}$ for all $n\geq{0}.$

We say that $\cC$ is right $n$-coherent if the category $\operatorname{Fun}(\cC^{op}, \operatorname{Ab})$ is $n$-coherent in the sense of \cite[Definition 4.6]{bgp}. In other words, $\cC$ is right $n$-coherent if and only if the $\cC$-modules of type $\mathcal{FP}_{n}$ in $\operatorname{Fun}(\cC^{op}, \operatorname{Ab})$  coincide with those of type  $\mathcal{FP}_{\infty}$. We say that $\cC$ is right {\emph{regular $n$-coherent}} if $\cC$ is right $n$-coherent and every $\cC$-module $F$  of type $\mathcal{FP}_{n}$ has finite projective dimension.  In Proposition \ref{eq}, we prove that this homological property of $\mathcal{C}$ also holds for $\mathcal{C}_{\oplus}$.
 
Let $n\geq 1$ and $f:x\rightarrow y$ be a morphism in $\cC$. Following \cite{BOPM}, we say that $f$ has a \emph{pseudo $n$-kernel}  if there exists a chain of morphisms: 
$$ x_n \xrightarrow{f_{n}} x_{n-1}\xrightarrow{f_{n-1}} x_{n-2}\rightarrow\cdots\xrightarrow{f_{2}} x_1\xrightarrow{f_{1}} x\xrightarrow{f} y $$
such that the following sequence of abelian groups is exact:
$$ \hom_\cC(-, x_n)\xrightarrow{f_{n*}} \cdots\rightarrow \hom_\cC( -,x_1)\xrightarrow{f_{1*}} \hom_\cC( -,x)\xrightarrow{f_*} \hom_\cC( -,y).$$
In Proposition \ref{char}, we establish necessary and sufficient intrinsic conditions on $\mathcal{C}$ for it to be right regular $n$-coherent. Specifically, we demonstrate that an additive category $\mathcal{C}$ is right regular $n$-coherent if and only if the following conditions hold:
\begin{enumerate}
\item[i)] Every morphism in $\cC$ with a pseudo $(n-1)$-kernel has a pseudo $n$-kernel. 
\item[ii)] For every morphism $f:x\rightarrow y$ in $\cC$ with a pseudo $\infty$-kernel, there exist $k\in\mathbb{N}$ and a morphism $\alpha:x_{k-1}\rightarrow x_{k-1}$ such that the following diagram commutes:
$$\xymatrix{
x_k\ar[rd]^{0} \ar[r]^{f_{k}} & x_{k-1}\ar[d]^{\alpha} \ar[r]^{f_{k-1}} & x_{k-2} \ar[r]^{f_{k-2}}& \cdots
&\cdots \ar[r]^{f_{2}}& x_{1} \ar[r]^{f_{1}}& x \ar[r]^{f}& y \\
& x_{k-1}\ar[ru]_{f_{k-1}}}$$\\
\end{enumerate}

The algebraic K-theory of a $\Z$-linear category $\mathcal{C}$ is defined using the non-connective spectrum $\bold{K}^{\infty}(\mathcal{C}_{\oplus})$, which was introduced in \cite{pw}. Furthermore, $\mathcal{C}$ is associated with a ring defined as $$\mathcal{A}(\mathcal{C})=\bigoplus_{a,b\in\obc}\hom_\cC(a,b),$$ where $\obc$ denotes the objects of $\mathcal{C}$. The multiplication and addition in $\mathcal{A}(\mathcal{C})$ are defined naturally, resulting in a ring with local units.  It is important to note that $\mathcal{A}(\mathcal{C})$ is unital only when $\obc$ is finite.   Furthermore,  it is worth mentioning that there exists a weak equivalence between the spectrum of the algebraic K-theory of $\mathcal{C}$ and the spectrum of the algebraic K-theory of $\mathcal{A}(\mathcal{C})$, as shown in \cite[Sec. 4.2]{ce}. Therefore, the K-theory groups of $\mathcal{C}$ and $\mathcal{A}(\mathcal{C})$ coincide.

In Section \ref{sec3}, we compare the notions of Noetherianity, regular coherence, and regular $n$-coherence of $\mathcal{A}(\mathcal{C})$ with the corresponding notions for $\mathcal{C}$ when $\mathcal{C}$ is a $\mathbb{Z}$-linear category with finitely many objects. This comparison allows us to establish a relationship between the properties of $\mathcal{C}$ and the properties of $\mathcal{A}(\mathcal{C})$. It is important to note that although some of the results we have used do not require the condition $\mathcal{C}$ having finitely many objects, this condition is necessary to guarantee that the ring $\mathcal{A}(\mathcal{C})$ has a unit.
A $\mathbb{Z}$-linear category $\mathcal{C}$ is right regular $n$-coherent if and only if the additive category $\mathcal{C}_{\oplus}$ associated with $\mathcal{C}$ has this property, as stated in Proposition \ref{eq}. The reason for working with $\mathbb{Z}$-linear categories instead of additive categories is that the ring $\mathcal{A}(\mathcal{C}_{\oplus})$ does not have a unit due to the fact that $\mathcal{C}_{\oplus}$ has infinitely many objects. By considering $\mathbb{Z}$-linear categories, we are able to address this issue and ensure the existence of a unit for the corresponding ring.

We see in Proposition \ref{propeq} that the category $\operatorname{Fun}(\cC^{op}, \operatorname{Ab})$ is equivalent  to $\operatorname{Mod}$-$\cA(\cC)$, where $\operatorname{Mod}$-$\cA(\cC)$ denotes the category of unital right modules over $\cA(\cC)$. Furthermore, in Proposition \ref{catring}, we prove that $\cC$ is  a $\Z$-linear category with finitely many objects  then a $\cC$ is right Noetherian ($n$-coherent or regular $n$-coherent) if and only if $\cA(\cC)$ is (strong $n$-coherent or $n$-regular and strong $n$-coherent). We use Proposition \ref{catring} and results for rings in order to obtain information about the K-theory of a $\Z$-linear category. These results are not completely original but the way to obtain them is.

In Section \ref{sec4}, we prove that if $\mathcal{D}=\cC$, $\mathcal{D}=\cC_{\oplus}$ or $\mathcal{D}=\operatorname{colim}_{f\in F}\cC_{f}$ with $\cC$ and $\cC_{f}$ regular $\Z$-linear categories with finitely many objects, then 
$K_{i}(\mathcal{D})=0$ $\forall i <0$.
We also prove that if $\mathcal{D}=\cC$, $\mathcal{D}=\cC_{\oplus}$ or $\mathcal{D}=\operatorname{colim}_{f\in F}\cC_{f}$ with $\cC$ and $\cC_{f}$ regular coherent $\Z$-linear categories with finitely many objects, then 
$K_{-1}(\mathcal{D})=0$, 
$$
K_{i}(\mathcal{D})\simeq K_{i}(\mathcal{D}[t])\quad \mbox{and} \quad K_{i+1}(\mathcal{D}[t,t^{-1}]) \simeq K_{i+1}(\mathcal{D})\oplus K_{i}(\mathcal{D}) \quad \forall i \geq 0.
$$
In Proposition \ref{generaln} we obtain a generalization of \cite[Thm 3.2]{ep}.

\section{Modules over 
$\Z$-linear categories}

 A {\emph{$\Z$-linear category}} is a category $\cC$ such that for every two objects $a, b \in\cC$, the set of morphisms $\hom_\cC(a,b)$ is an abelian group, and for any other object $c \in\cC$, the composition
$$ \hom_\cC(b,c)\times\hom_\cC(a,b)\rightarrow\hom_\cC(a,c)$$ 
is a bilinear map.
Throughout this paper, we assume that $\Z$-linear categories $\cC$ are  small, i.e.  the collection of objects is a set. 
A $\Z$-linear category is  {\emph{additive}} if it has an initial object and finite products. We consider the free additive category $\cC_{\oplus}$ as follow. The objects of $\cC_{\oplus}$ are finite tuples of objects in $\cC$. A morphism from $\textbf{a}=(a_{1}, \cdots,a_{k})$ to $\textbf{c}=(c_{1}, \cdots,c_{m})$ for $a_{i}, c_{j} \in\cC$ is given by $m\times{k}$ matrix of morphisms in $\cC$ (the composition is given by the usual row-by-column multiplication of matrices), 
\begin{itemize}
\item[-] $\obc_{\oplus}=\lbrace{(c_{1}, \cdots,c_{k}): c_{i}\in\cC, k\in \mathbb{N}}\rbrace $
\item[-] $\operatorname{hom}_{\cC_{\oplus}}(\textbf{a},\textbf{c})= \prod_{i=1}^{k}\prod_{j=1}^{m}\hom_\cC(a_{i},c_{j}).$ 
\end{itemize}
There is an obvious embedding  $ \cC \rightarrow \cC_{\oplus}$ which maps objects and morphisms to their associated $1$-tuple. If  $\cC$ is a $\Z$-linear category then $\cC_{\oplus}$ is a small additive category.

The {\emph{idempotent completion}} $\operatorname{Idem}(\cC_{\oplus})$ of $\cC_{\oplus}$ is defined to be the following small additive category.
\begin{itemize}
\item[-] $\ob(\operatorname{Idem}(\cC_{\oplus}))=\{({\bf{c}},p):\ c\in \obc_{\oplus},\ p:{\bf{c}}\rightarrow{\bf{c}}$  such that $p^{2}=p\}$
\item[-] $\hom_{\operatorname{Idem}(\cC_{\oplus})}(({\bf{c}}_{1},p_{1}),({\bf{c}}_{2},p_{2}))=\lbrace w:{\bf{c}}_{1}\rightarrow{\bf{c}}_{2}$  such that  $w=p_{2}wp_{1} \}$.
\end{itemize}
By construction $\cC\simeq\cC_{\oplus}$ if $\cC$ is additive and $\cC_{\oplus}\simeq{\operatorname{Idem}}(\cC_{\oplus})$ if idempotents split in $\cC_{\oplus}.$ Recall the additive category $\cC_{\oplus}$ is  equivalent to Idem$(\cC_{\oplus})$ if and only if every idempotent has a kernel.

\begin{ex}\label{rincatr}
Given a ring $R$, consider $\cC=\underline{R}$ the category which has one object $\star$ and $\hom_{\cC}(\star, \star)=R$. The multiplication on $R$ gives the composition on $\underline{R}$. The category $\cC_{\oplus}$ is the category whose objects are natural numbers $m>0$ and the morphisms are the matrices with coefficients in $R$, $\hom_{\cC_{\oplus}}(m,n)=M_{n\times m}(R)$.
\end{ex}

\begin{ex} Let $R$ be an associative ring with unity. If  $\cC$ is the category of finitely generated free $R$-modules, then  Idem($\cC$) is equivalent to the category of finitely generated projective $R$-modules.
\end{ex}

\subsection{Pseudo $n$-kernels and pseudo $n$-cokernels}

Given a $\Z$-linear category $\cC$ we recall that a \emph{pseudo kernel} of a morphism $f:x\rightarrow y$ in $\cC$ is a morphism   $g:k\rightarrow x$ with $f\circ{g}=0$, such that for any morphism  $h:c\rightarrow x$ with $f\circ{h}=0,$ there exists $t:c\rightarrow k$ with $g\circ{t}=h$. Equivalently, a morphism $g:k\rightarrow x$ in $\cC$ is said to be a pseudo kernel of $f$ if, for any $c\in\ob \cC$, the following sequence of abelian groups is exact
 $$ \hom_\cC(c,k)\rightarrow \hom_\cC(c,x)\rightarrow \hom_\cC(c,y).$$  
Pseudo-kernels have been introduced by Freyd \cite{Frey} as weak kernels. \emph{Pseudo-cokernels} are pseudo kernels in $\cC^{op}$. By \cite[Corollary 1.1]{PSV}  the categories $\cC$, $\cC_{\oplus}$ and $\operatorname{Idem}(\cC_{\oplus})$ all have pseudo kernels or they don´t. Let us remark that any triangulated or abelian category has pseudo-kernels and pseudo-cokernels. 

 Let $n\geq 1$ and $f:x\rightarrow y$ be a morphism in $\cC$. Following \cite{BOPM}, we say that $f$ has a \emph{pseudo $n$-kernel}  if there exists a chain of morphisms 
$$ x_n \xrightarrow{f_{n}} x_{n-1}\xrightarrow{f_{n-1}} x_{n-2}\rightarrow\cdots\xrightarrow{f_{2}} x_1\xrightarrow{f_{1}} x\xrightarrow{f} y $$
such that the following sequence of abelian groups is exact
$$ \hom_\cC(-, x_n)\xrightarrow{f_{n*}} \cdots\rightarrow \hom_\cC( -,x_1)\xrightarrow{f_{1*}} \hom_\cC( -,x)\xrightarrow{f_*} \hom_\cC( -,y).$$
We denote the pseudo $n$-kernel by $(f_n,f_{n-1},\cdots,f_{1})$. The case $n=1$ gives us the classic pseudo-kernels. For convenience, we let $x_0:=x$. Furthermore, any morphism $f$ in $\cC$ will be assumed to be a pseudo $0$-kernel of itself. We say that $f$ has a \emph{ pseudo $\infty$-kernel}  if there exists a chain of morphisms 
$$\cdots\rightarrow x_{n+1} \xrightarrow{f_{n+1}} x_n \xrightarrow{f_{n}} x_{n-1}\rightarrow\cdots\xrightarrow{f_{2}} x_1\xrightarrow{f_{1}} x\xrightarrow{f} y $$
such that the following sequence of abelian groups is exact
$$\cdots \rightarrow \hom_\cC(-, x_{n+1})\xrightarrow{f_{n+1*}} \hom_\cC(-, x_n)\xrightarrow{f_{n*}} \cdots\xrightarrow{f_{1*}} \hom_\cC( -,x)\xrightarrow{f_*} \hom_\cC( -,y).$$
Pseudo $n$-cokernels are defined as pseudo $n$-kernels in $\cC^{op}$. 

\subsection{Categories of $\Z$-linear functors}

The category of abelian groups will be denoted by $\operatorname{Ab}$. 
For any $\Z$-linear category $\cC,$ we define a {\emph{left $\cC$-module}} as a $\Z$-linear functor $F: \cC \rightarrow{\operatorname{Ab}}$. We consider natural transformations as morphisms of $\cC$-modules. Define a {\emph{right $\cC$-module}} as a $\Z$-linear functor $F: \cC^{op} \rightarrow{\operatorname{Ab}}$. Recall that a $\Z$-linear functor $F: \cC^{op} \rightarrow \operatorname{Ab}$ satisfies that $F(f+g) = F(f)+F(g)$ where $f,g\in \hom_{\cC^{op}}(x,y)$.  In these categories limits and colimits of functors are defined objectwise.  Denote by $\operatorname{Fun}(\cC^{op}, \operatorname{Ab})$ the category of right $\cC$-modules. This category is cocomplete  and abelian.
If $c$ is an object of $\cC$ then there is the corresponding representable functor $\hom_{\cC}(-,c): \cC^{op} \rightarrow{\operatorname{Ab}}$. 
\begin{lem}(Yoneda Lemma) Let $\cC$ be any $\Z$-linear category. Take $c\in \cC$ and $F$ a right $\cC$-module. Then there is a natural identification 
$$\hom_{\operatorname{Fun}(\cC^{op}, \operatorname{Ab})}(\hom_{\cC}(-,c),F(-))\cong F(c).$$\end{lem}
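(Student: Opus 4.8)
The statement is the $\operatorname{Ab}$-enriched Yoneda Lemma, so the plan is to exhibit the classical pair of mutually inverse maps and to check that, in this $\Z$-linear setting, they are homomorphisms of abelian groups and natural in both variables. First I would define the \emph{evaluation} map
$$\Phi\colon \hom_{\operatorname{Fun}(\cC^{op},\operatorname{Ab})}(\hom_\cC(-,c),F)\lra F(c),\qquad \Phi(\eta)=\eta_c(\id_c),$$
sending a natural transformation to the image of $\id_c\in\hom_\cC(c,c)$ under its component at $c$. In the reverse direction I would define $\Psi\colon F(c)\lra\hom_{\operatorname{Fun}(\cC^{op},\operatorname{Ab})}(\hom_\cC(-,c),F)$ by $\Psi(x)=\eta^x$, where $\eta^x$ has component at an object $a$ given by
$$\eta^x_a\colon \hom_\cC(a,c)\lra F(a),\qquad \eta^x_a(f)=F(f)(x).$$
Here one must watch the variance: since $F$ is contravariant, a morphism $f\colon a\r c$ of $\cC$ induces $F(f)\colon F(c)\r F(a)$, so $\eta^x_a$ has the correct target.

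Then I would check that $\eta^x$ is a genuine morphism of right $\cC$-modules. Naturality with respect to $u\colon a\r b$ in $\cC$ is the identity $\eta^x_a\circ\hom_\cC(u,c)=F(u)\circ\eta^x_b$, which on $g\in\hom_\cC(b,c)$ reduces to $F(g\circ u)(x)=F(u)\bigl(F(g)(x)\bigr)$, i.e.\ exactly the functoriality of $F$. That each $\eta^x_a$ is a group homomorphism---so that $\eta^x$ actually lies in $\operatorname{Fun}(\cC^{op},\operatorname{Ab})$---is where the $\Z$-linear hypothesis is used: additivity of $F$ on morphisms, $F(f+f')=F(f)+F(f')$, makes $f\mapsto F(f)(x)$ additive, while $\hom_\cC(-,c)$ takes values in $\operatorname{Ab}$ precisely because composition in $\cC$ is bilinear.

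Next I would verify that $\Phi$ and $\Psi$ are mutually inverse. The composite $\Phi\circ\Psi$ is immediate: $\Phi(\eta^x)=\eta^x_c(\id_c)=F(\id_c)(x)=x$. For $\Psi\circ\Phi$, set $x=\eta_c(\id_c)$ and prove $\eta^x=\eta$ by a diagram chase: evaluating the naturality square of $\eta$ on the morphism $f\colon a\r c$ and chasing $\id_c$ around it yields $\eta_a(f)=F(f)\bigl(\eta_c(\id_c)\bigr)=\eta^x_a(f)$. Finally, since addition of natural transformations is defined componentwise, $\Phi$ is additive and hence an isomorphism of abelian groups; naturality in $c$ and in $F$ is the routine verification that $\Phi$ commutes with precomposition by morphisms $c\r c'$ of $\cC$ and with postcomposition by maps $F\r F'$ of $\cC$-modules.

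There is no serious obstacle here, as the Yoneda Lemma is formal and the argument reproduces the classical one verbatim. The only points demanding care are the variance bookkeeping forced by working with $\cC^{op}$ and contravariant $F$, and the observation that it is the additivity of $F$ together with the bilinearity of composition in $\cC$ that upgrades the usual natural bijection of sets to an isomorphism internal to $\operatorname{Ab}$.
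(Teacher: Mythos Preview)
Your proof is correct and is precisely the standard argument for the (additive/enriched) Yoneda Lemma. The paper itself does not supply a proof of this lemma: it is stated as a classical fact and immediately used, so there is no alternative approach to compare against.
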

By Yoneda Lemma, the family $\lbrace{\hom_{\cC}(-,c)}\rbrace_{c\in\cC}$ is a generating set of finitely generated projective modules in $\operatorname{Fun}(\cC^{op}, \operatorname{Ab})$. A right $\cC$-module $M$ is {\emph{free}}  if it is isomorphic to 
$\bigoplus_{i\in I}\hom_{\cC}(-,a_{i})$. It is free and finitely generated if $I$ is finite.\\

Let $R$ be a ring and $\underline{R}$ be the $\Z$-linear category defined in Example \ref{rincatr}. Note that
$$
\mbox{$R$-$\operatorname{Mod}$} \cong \operatorname{Fun}(\underline{R}, \operatorname{Ab})$$
$$\mbox{$\operatorname{Mod}$-$R$} \cong \operatorname{Fun}(\underline{R}^{op}, \operatorname{Ab}).$$

\subsection{Finitely $n$-presented objects and $n$-coherent categories}

Let $n\geq{1}$ be a positive integer.  According to \cite[Definition 2.1]{bgp} 
 a right $\cC$-module $F$ is said to be \emph{finitely $n$-presented} or \emph{of type $\mathcal{FP}_{n}$} if the functors $\operatorname{Ext}^{i}_{\operatorname{Fun}(\cC^{op}, \operatorname{Ab})}(F,-)$ preserves direct limits for all $0\leq{i}\leq{n-1}.$  Denote by $\mathcal{FP}_{0}$ to the set of finitely generated objects. Then, a right $\cC$-module $M$ is of type $\mathcal{FP}_{0}$  if there exists a collection of objects $\lbrace{c_{j}: j\in{J}}\rbrace$ in $\cC$ for some finite set $J$ and an epimorphism $\bigoplus_{j\in{J}}\hom_\cC(-,c_{j})\rightarrow M$. Furthermore, a right $\cC$-module $F$ is said to be of type $\mathcal{FP}_{\infty}$ if it is of type $\mathcal{FP}_{n}$ for all $n\geq{0}.$\\

Recall that a {\emph{Grothendieck category}} is a cocomplete abelian category, with a generating set and with exact direct limits. A Grothendieck category is \emph{locally finitely generated (presented)} if it has a set of finitely generated (presented) generators. In other words,  each object is a direct union (limit) of finitely generated (presented) objects.  A Grothendieck category is \emph{locally type $\mathcal{FP}_{n}$} \cite[Definition 2.3]{bgp}, if it has a generating set consisting of objects of type $\mathcal{FP}_{n}$. 

 According to \cite[Example 3.2]{gil} any finitely generated projective right $\cC$-module is of type $\mathcal{FP}_{n}$ for all $n\geq{0}$. Then, the functor category $\operatorname{Fun}(\cC^{op} \operatorname{Ab})$ is a locally type $\mathcal{FP}_{\infty}$ Grothendieck category. Therefore, by the \cite[Corollary 2.14]{bgp}, a right $\cC$-module $F$ is of type $\mathcal{FP}_{n}$ if and only if there exists an exact sequence 
$$ P_{n} \rightarrow \cdots \rightarrow P_{1} \rightarrow P_{0} \xrightarrow \ F \rightarrow 0$$
where $P_{i}$ is finitely generated and projective right $\cC$-module for every $0\leq{i}\leq{n}.$

Recall from \cite[Definition 4.1]{bgp} that a right $\cC$-module $F$ is \emph{$n$-coherent} if satisfies the following conditions:
\begin{enumerate}
\item[(1)] $F$ is of type $\mathcal{FP}_{n}$.
\item[(2)]  If $S$ is a subobject of $F$ that is of type $\mathcal{FP}_{n-1}$ then $S$ is also of type $\mathcal{FP}_{n}$.
\end{enumerate}

\begin{defn} Let $\cC$ be a $\Z$-linear category  and $n\geq 0$. We say that $\cC$ is right (left) \emph{$n$-coherent} if every right (left) $\cC$-module $F$ of type $\mathcal{FP}_{n}$ is $n$-coherent. 
\end{defn}
Note that the $\Z$-linear category $\cC$ is right $n$-coherent if the category $\operatorname{Fun}(\cC^{op}, \operatorname{Ab})$ is $n$-coherent as a Grothendieck category in the sense of \cite[Definition 4.6]{bgp}. 
Thus by \cite[Theorem 4.7]{bgp},  $\cC$ is right $n$-coherent if and only if the $\cC$-modules of type $\mathcal{FP}_{n}$ coincide with the $\cC$-modules of type  $\mathcal{FP}_{\infty}$. 

In particular, an additive category $\cC$ is Noetherian as defined in \cite[Definition 5.2]{bl} if and only if it is right $0$-coherent.\footnote{In \cite{bl}, the word right is omitted, but we have chosen to include it in our notation.} 
Moreover, if $1\leq n\leq\infty$ and $\cC$ is any small additive category, then the following conditions are equivalent, as shown in \cite[Proposition 5.4]{BOPM}:
\begin{enumerate}
\item[(1)] $\cC$ is right $n$-coherent. 
\item[(2)] If a morphism in $\cC$ has a pseudo $(n-1)$-kernel, then it has a pseudo $n$-kernel. 
\end{enumerate}

\begin{defn} Let $\cC$ be a $\Z$-linear category and $n\geq 0$. We say that $\cC$ is \emph{right regular n-coherent}  if it satisfies the following conditions:
\begin{itemize}
\item[(1)] $\cC$ is right $n$-coherent.
\item[(2)] Every right $\cC$-module $F$ of type $\mathcal{FP}_{n}$ has a projective dimension. 
\end{itemize} 
\end{defn}
Let $\cC$ be a small additive category. Then, according to \cite[Definition 5.2]{bl} $\cC$ is regular coherent if and only if it is right regular $1$-coherent.

\begin{ex} Let $\cC$ be a small additive category and  $n\geq 1$.
\begin{enumerate}
\item[I.]\textbf{Additive category with  kernels.} By a result due to Auslander \cite[Theorem 2.2.b]{aus} a small additive category $\cC$ with  kernels is $1$-coherent and every $\cC$-module of type $\mathcal{FP}_{1}$ has projective dimension at most 2. Then $\cC$ is right regular $1$-coherent.
\item[II.]\textbf{Von Neumann regular categories.}  We recall that $\cC$ is called von Neumann regular if for any morphism $f:a\rightarrow b$ in $\cC$ there exists a morphism $g:b\rightarrow a$ such that $fgf=f$. By \cite[Corollary 8.1.3]{Apostol} $\cC$ is right regular $1$-coherent.
\item[III.]\textbf{Locally finitely presented categories.} An object $c\in\cC$ is finitely presented if the functor  $\hom_\cC(c,-)$ preserves direct limits. The category $\cC$ is locally finitely presented if every directed system of objects and morphisms has a direct limit, the class of finitely presented objects of $\cC$ is skeletally small and every object of $\cC$ is the direct limit of finitely presented objects.  Then by \cite[Lemma 2.2]{Garcia}, every locally finitely presented category is left $1$-coherent.  
\item[IV.]\textbf{n-hereditary categories}.  Suppose the  following two conditions hold in $\cC$:
\begin{enumerate}
    \item Every morphism in $\cC$ with a pseudo $(n-1)$-kernel has a pseudo $n$-kernel.
    \item For every morphism $f:x\rightarrow y$ in $\cC$ with pseudo $n$-kernel $(f_{n},\cdots,f_1)$, there exists an endomorphism $\alpha:x_{n-1}\rightarrow x_{n-1}$ making the following diagram commute:
$$\xymatrix{
x_k\ar[rd]^{0} \ar[r]^{f_{k}} & x_{k-1}\ar[d]^{\alpha} \ar[r]^{f_{k-1}} & x_{k-2} \ar[r]^{f_{k-2}}& \cdots
&\cdots \ar[r]^{f_{2}}& x_{1} \ar[r]^{f_{1}}& x \ar[r]^{f}& y \\
& x_{k-1}\ar[ru]_{f_{k-1}}}$$
\end{enumerate}
By \cite[Theorem 5.5]{BOPM}, $\cC$ is right $n$-coherent and every $\cC$-module of type $\mathcal{FP}_{n}$  has projective dimension less than or equal $1.$ Therefore, $\cC$ is right regular $n$-coherent.
\end{enumerate}
\end{ex}
Due to \cite[Lemma 1.1, 1.2]{PSV} we have the following equivalences of categories
 $$\operatorname{Fun}(\cC^{op}, \operatorname{Ab})\simeq \operatorname{Fun}(\cC^{op}_{\oplus}, \operatorname{Ab})\simeq \operatorname{Fun}(\operatorname{Idem}(\cC^{op}_{\oplus}), \operatorname{Ab})$$
 In other words $\cC$, $\cC_{\oplus}$ and $\operatorname{Idem}(\cC_{\oplus})$  are Morita equivalents. In particular, we obtain the following result. 

\begin{prop}\label{eq}
Let $\cC$ be a $\Z$-linear category. The following are equivalent:

\begin{enumerate}
\item[(1)] $\cC$ is  right regular $n$-coherent.
\item[(2)] $\cC_{\oplus}$ is right regular $n$-coherent.
\item[(3)] $\operatorname{Idem}(\cC_{\oplus})$ is right regular $n$-coherent. 
\end{enumerate}
\end{prop}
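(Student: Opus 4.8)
The plan is to recognize that right regular $n$-coherence is an invariant of the module category and then to transport it along the equivalences recalled just above the statement. Unwinding the definitions, the assertion that $\cC$ (respectively $\cC_\oplus$, respectively $\operatorname{Idem}(\cC_\oplus)$) is right regular $n$-coherent amounts to the conjunction of two properties of the abelian category $\operatorname{Fun}(\cC^{op}, \operatorname{Ab})$ (respectively $\operatorname{Fun}(\cC^{op}_\oplus, \operatorname{Ab})$, respectively $\operatorname{Fun}(\operatorname{Idem}(\cC^{op}_\oplus), \operatorname{Ab})$): (a) every object of type $\mathcal{FP}_n$ is of type $\mathcal{FP}_\infty$, and (b) every object of type $\mathcal{FP}_n$ has finite projective dimension. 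By \cite[Lemma 1.1, 1.2]{PSV} these three module categories are equivalent, so it suffices to prove that properties (a) and (b) are preserved by any equivalence of abelian categories between them.

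So I would fix such an equivalence $\Phi$. As an equivalence of categories between abelian categories, $\Phi$ is automatically additive and exact, and it preserves the class of finitely generated projective objects, this being a purely categorical notion. Using the characterization from \cite[Corollary 2.14]{bgp} recalled above --- that $F$ is of type $\mathcal{FP}_n$ exactly when it admits an exact sequence $P_n \to \cdots \to P_0 \to F \to 0$ with each $P_i$ finitely generated projective --- exactness of $\Phi$ together with its preservation of finitely generated projectives shows that $\Phi$ carries objects of type $\mathcal{FP}_n$ to objects of type $\mathcal{FP}_n$ for every $n$, and similarly for type $\mathcal{FP}_\infty$; hence (a) transfers across $\Phi$. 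For (b), an exact equivalence sends projective resolutions to projective resolutions and so preserves projective dimension, or equivalently $\Phi$ identifies the groups $\operatorname{Ext}^i$ computed on either side. Thus an object of type $\mathcal{FP}_n$ of finite projective dimension is sent to one of the same type and the same finite projective dimension, and (b) transfers as well.

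Combining these, each of (a) and (b) holds in $\operatorname{Fun}(\cC^{op}, \operatorname{Ab})$ if and only if it holds in $\operatorname{Fun}(\cC^{op}_\oplus, \operatorname{Ab})$ if and only if it holds in $\operatorname{Fun}(\operatorname{Idem}(\cC^{op}_\oplus), \operatorname{Ab})$, which is precisely the equivalence of (1), (2) and (3). The only point that requires genuine care --- and the step I would expect to be the main, if modest, obstacle --- is confirming that type $\mathcal{FP}_n$, type $\mathcal{FP}_\infty$ and projective dimension are intrinsic to the abelian category, so that they are transported by $\Phi$; once the resolution description of \cite[Corollary 2.14]{bgp} is available, this reduces to the standard facts that equivalences of abelian categories are exact and preserve finitely generated projectives.
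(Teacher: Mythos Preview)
Your proposal is correct and follows essentially the same approach as the paper: the paper simply observes, immediately before the proposition, that \cite[Lemma~1.1, 1.2]{PSV} gives equivalences $\operatorname{Fun}(\cC^{op}, \operatorname{Ab})\simeq \operatorname{Fun}(\cC^{op}_{\oplus}, \operatorname{Ab})\simeq \operatorname{Fun}(\operatorname{Idem}(\cC^{op}_{\oplus}), \operatorname{Ab})$ and deduces the result from Morita invariance without further argument. You have supplied the details the paper leaves implicit, namely that the classes $\mathcal{FP}_n$, $\mathcal{FP}_\infty$ and finite projective dimension are intrinsic to the abelian category and hence transported by any equivalence.
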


Let $R$ be a ring with unity. A finitely $n$-presented right $R$-module $M$ is $n$-coherent if every finitely $(n-1)$-presented submodule $N\subseteq M$ is finitely $n$-presented. The ring $R$ is right $n$-coherent if $R$ is $n$-coherent as a right $R$-module (i.e. if each finitely $(n-1)$-presented right ideal of $R$ is finitely $n$-presented). We say that $R$ is strong right $n$-coherent if each finitely $n$-presented right $R$-module is finitely $(n+1)$-presented. A strong $n$-coherence ring is equivalent to a $n$-coherence ring for $n =1$, but it is an open question for $n \geq 2$. 
A coherent ring is a 1-coherent ring (strong 1-coherent ring) and it is regular if and only if every finitely presented module has finite projective dimension. Motivated by this we introduce in \cite[Definition 2.9]{ep} the definition of $n$-regular ring. 
Let $n \geq 1$, a ring $R$ is called  right \emph{$n$-regular} if each finitely $n$-presented  right $R$-module has finite projective dimension.

\begin{cor} Let $R$ be a ring with unity and $n\geq 1$. Then the following are equivalent. 

\begin{enumerate}
\item[(1)] The ring $R$ is  strong right $n$-coherent or right $n$-regular and  strong right $n$-coherent respectively;
\item[(2)] The additive category $\underline{R}_{\oplus}$ is right $n$-coherent or right regular $n$-coherent respectively;
\item[(3)] The additive category $Idem(\underline{R}_{\oplus})$ is  right $n$-coherent or right regular $n$-coherent respectively.
\end{enumerate}
\end{cor}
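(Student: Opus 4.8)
The plan is to deduce the corollary from Proposition~\ref{catring} applied to the one-object category $\underline{R}$, combined with the Morita invariance recorded just before Proposition~\ref{eq}. The first step is to compute the ring attached to $\underline{R}$: since $\underline{R}$ has the single object $\star$ with $\hom_{\underline{R}}(\star,\star)=R$, the formula $\cA(\cC)=\bigoplus_{a,b\in\obc}\hom_{\cC}(a,b)$ gives $\cA(\underline{R})=R$. In particular $\underline{R}$ is a $\Z$-linear category with finitely many objects, so $\cA(\underline{R})$ is unital and Proposition~\ref{catring} applies with $\cC=\underline{R}$. It yields at once that $\underline{R}$ is right $n$-coherent if and only if $R$ is right strong $n$-coherent, and that $\underline{R}$ is right regular $n$-coherent if and only if $R$ is right $n$-regular and strong $n$-coherent.

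It then remains to transfer these two properties from $\underline{R}$ to $\underline{R}_{\oplus}$ and to $\operatorname{Idem}(\underline{R}_{\oplus})$. For the regular case this is exactly Proposition~\ref{eq} with $\cC=\underline{R}$, which asserts that $\underline{R}$, $\underline{R}_{\oplus}$ and $\operatorname{Idem}(\underline{R}_{\oplus})$ are simultaneously right regular $n$-coherent. For the non-regular case I would use that right $n$-coherence of a category is, by its very definition, a property of the associated module category: $\cC$ is right $n$-coherent precisely when $\operatorname{Fun}(\cC^{op},\operatorname{Ab})$ is $n$-coherent in the sense of \cite[Definition~4.6]{bgp}, that is, when $\mathcal{FP}_{n}$ and $\mathcal{FP}_{\infty}$ coincide there. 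The equivalences $\operatorname{Fun}(\underline{R}^{op},\operatorname{Ab})\simeq\operatorname{Fun}(\underline{R}^{op}_{\oplus},\operatorname{Ab})\simeq\operatorname{Fun}(\operatorname{Idem}(\underline{R}^{op}_{\oplus}),\operatorname{Ab})$ are equivalences of abelian categories, hence carry $\mathcal{FP}_{n}$-objects to $\mathcal{FP}_{n}$-objects and preserve their coincidence with $\mathcal{FP}_{\infty}$; thus $\underline{R}$, $\underline{R}_{\oplus}$ and $\operatorname{Idem}(\underline{R}_{\oplus})$ are simultaneously right $n$-coherent.

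Putting the two steps together, we obtain that $R$ is right strong $n$-coherent if and only if $\underline{R}$ is right $n$-coherent, if and only if $\underline{R}_{\oplus}$ is right $n$-coherent, if and only if $\operatorname{Idem}(\underline{R}_{\oplus})$ is right $n$-coherent; this is the non-regular form of the asserted equivalence of (1), (2) and (3). Inserting ``regular'' throughout and replacing the functor-category argument by Proposition~\ref{eq} gives the regular form. I expect the only delicate point to be bookkeeping rather than substance: one must check that Proposition~\ref{catring}, stated for $\Z$-linear categories with finite objects, genuinely applies to the (non-additive) category $\underline{R}$ --- it has a single object, so it does --- and that $\cA(\underline{R})=R$ is the correct unital ring. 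The matching of the categorical notion of $n$-coherence of $\operatorname{Mod}$-$R\cong\operatorname{Fun}(\underline{R}^{op},\operatorname{Ab})$ with the module-theoretic definition of strong $n$-coherence is already encoded in \cite[Theorem~4.7]{bgp}, which is precisely what was used to define right $n$-coherence of a category.
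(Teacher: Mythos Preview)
Your argument is correct, but it takes a detour that the paper avoids. The corollary sits in Section~2, immediately after Proposition~\ref{eq}, whereas Proposition~\ref{catring} only appears in Section~3; so invoking it here is a forward reference. The paper's intended proof is more direct: the identification $\operatorname{Mod}\text{-}R\cong\operatorname{Fun}(\underline{R}^{op},\operatorname{Ab})$ is recorded explicitly earlier in Section~2, and since right $n$-coherence of $\underline{R}$ is by definition a property of $\operatorname{Fun}(\underline{R}^{op},\operatorname{Ab})$, it coincides tautologically with $n$-coherence of $\operatorname{Mod}\text{-}R$, i.e.\ with right strong $n$-coherence of $R$ via \cite[Theorem~4.7]{bgp}. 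The passage to $\underline{R}_{\oplus}$ and $\operatorname{Idem}(\underline{R}_{\oplus})$ is then exactly what you wrote: Proposition~\ref{eq} for the regular case and the functor-category equivalences for the plain $n$-coherent case.

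Your route through $\cA(\underline{R})=R$ and Proposition~\ref{catring} is not circular --- that proposition is proved from Proposition~\ref{propeq} and Lemma~\ref{lempl}, neither of which uses the present corollary --- but it is overkill: Proposition~\ref{catring} is itself established by showing that the equivalence $\operatorname{Fun}(\cC^{op},\operatorname{Ab})\simeq\operatorname{Mod}\text{-}\cA(\cC)$ preserves the relevant finiteness conditions, and for $\cC=\underline{R}$ that equivalence is precisely the trivial identification you already acknowledge in your final paragraph. In effect you reprove the easy special case by quoting the general one.
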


Let $\cC$ be a small additive category. By \cite[Lemma 6.8]{bl}, $\cC$ is right Noetherian if and only if each object $c$ has the following property. Consider any directed set $I$ and collections of morphisms $\lbrace{f_i: a_i\rightarrow c}\rbrace_{i\in I}$ with $c$ as target such that $f_i\subseteq f_j$ holds for $i\leq j$, then there exists $i_0 \in I$ with $f_i\subseteq f_{i_0}$
for all $i\in I$.  Our aim is to find out intrinsic condition of $\cC$ which guarantees that $\operatorname{Fun}(\cC^{op}, \operatorname{Ab})$ is regular $n$-coherent.

\begin{prop}\label{char} Let $\cC$ be a small additive category and $n\geq 1$. The following are equivalent 
\begin{enumerate}
\item[(1)] $\cC$ is  right regular $n$-coherent.
\item[(2)] The following conditions hold in $\cC$:
\begin{enumerate}
\item[i)] Every morphism in $\cC$ with a pseudo $(n-1)$-kernel has a pseudo $n$-kernel. 
\item[ii)] For every morphism $f:x\rightarrow y$ in $\cC$ with pseudo $\infty$-kernel there exists $k\in\mathbb{N}$ and $\alpha:x_{k-1}\rightarrow x_{k-1}$ making the following diagram commute:

$$\xymatrix{
x_k\ar[rd]^{0} \ar[r]^{f_{k}} & x_{k-1}\ar[d]^{\alpha} \ar[r]^{f_{k-1}} & x_{k-2} \ar[r]^{f_{k-2}}& \cdots
&\cdots \ar[r]^{f_{2}}& x_{1} \ar[r]^{f_{1}}& x \ar[r]^{f}& y \\
& x_{k-1}\ar[ru]_{f_{k-1}}}$$

\end{enumerate}
\end{enumerate}
\end{prop}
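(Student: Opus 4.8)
The plan is to prove the equivalence by translating everything in (2) into homological language inside $\operatorname{Fun}(\cC^{op},\operatorname{Ab})$ via the Yoneda embedding, reducing (1) to a statement about projective dimensions of modules of type $\mathcal{FP}_n$. First I would dispose of condition (2)(i): by \cite[Proposition 5.4]{BOPM} a small additive category $\cC$ is right $n$-coherent if and only if every morphism with a pseudo $(n-1)$-kernel has a pseudo $n$-kernel, and right $n$-coherence is by definition part of right regular $n$-coherence. Hence in both implications I may assume $\cC$ is right $n$-coherent, so that the classes of modules of type $\mathcal{FP}_n$ and of type $\mathcal{FP}_\infty$ coincide, and the only remaining task is to match the \emph{regularity} clause of (1) with condition (2)(ii).

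The core of the argument is a dictionary between morphisms admitting pseudo $\infty$-kernels and modules of type $\mathcal{FP}_\infty$. Given $f\colon x\to y$ with a pseudo $\infty$-kernel $\cdots\to x_1\xrightarrow{f_1}x\xrightarrow{f}y$, set $P_i=\hom_\cC(-,x_i)$; the exactness defining the pseudo $\infty$-kernel says precisely that $\cdots\to P_1\to P_0\to M\to 0$ is a resolution by finitely generated projectives of $M:=\im(f_*)\subseteq\hom_\cC(-,y)$, so $M$ is of type $\mathcal{FP}_\infty$ by \cite[Corollary 2.14]{bgp}. Conversely, starting from a module $M$ of type $\mathcal{FP}_n=\mathcal{FP}_\infty$, I would build a resolution by finitely generated free modules, each of which is representable because $\cC$ is additive, using the standard fact that the kernel of an epimorphism from a finitely generated free module onto an $\mathcal{FP}_m$ module is $\mathcal{FP}_{m-1}$; reindexing and applying Yoneda converts this resolution into a morphism $f\colon x_1\to x_0$ whose pseudo $\infty$-kernel is the tail of the resolution and whose associated module $\im(f_*)$ is the first syzygy $\Omega M$. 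Since finiteness of projective dimension is insensitive to a syzygy shift, $\operatorname{pd} M<\infty$ if and only if $\operatorname{pd}\Omega M<\infty$.

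The technical heart is reading off (2)(ii). Let $K_j=\ker f_{j*}=\im f_{(j+1)*}$ be the syzygies of $M=\im(f_*)$, giving short exact sequences $0\to K_{k-1}\to P_{k-1}\xrightarrow{\pi}K_{k-2}\to 0$, where $\pi$ is the corestriction of $f_{(k-1)*}$. Since the endomorphism ring of $P_{k-1}$ in $\operatorname{Fun}(\cC^{op},\operatorname{Ab})$ is $\hom_\cC(x_{k-1},x_{k-1})$ by Yoneda, an endomorphism $\alpha$ of $x_{k-1}$ corresponds to $\alpha_*\colon P_{k-1}\to P_{k-1}$, and the two commutativities in the diagram, $\alpha\circ f_k=0$ and $f_{k-1}\circ\alpha=f_{k-1}$, translate faithfully into $\alpha_*|_{K_{k-1}}=0$ and $\pi\circ\alpha_*=\pi$. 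These say exactly that $\alpha_*$ factors as $\bar\alpha\circ\pi$ with $\pi\circ\bar\alpha=\id$, i.e. that $\bar\alpha$ splits $\pi$; equivalently the displayed sequence splits, so $K_{k-2}$ is a finitely generated projective summand of $P_{k-1}$ and $\operatorname{pd} M\le k-1$. Running this backwards, a projective syzygy $K_{k-2}$ yields a splitting $s$, and $\alpha_*:=s\circ\pi$ produces the required $\alpha$ by Yoneda. Thus condition (2)(ii) for a given $f$ is equivalent to $\operatorname{pd}(\im f_*)<\infty$.

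With the dictionary in place both implications follow: for (1)$\Rightarrow$(2), any $f$ with pseudo $\infty$-kernel yields $M=\im(f_*)$ of type $\mathcal{FP}_n$, regularity gives $\operatorname{pd} M<\infty$, and the previous paragraph supplies the diagram of (2)(ii); for (2)$\Rightarrow$(1), condition (2)(i) gives $n$-coherence, and for any $M$ of type $\mathcal{FP}_n=\mathcal{FP}_\infty$ the constructed $f$ has a pseudo $\infty$-kernel, so (2)(ii) forces $\operatorname{pd}\Omega M<\infty$ and hence $\operatorname{pd} M<\infty$. I expect the main obstacle to be the bookkeeping in this homological translation: matching the index $k$ in the diagram with the correct syzygy $K_{k-2}$, checking that the two commutativities correspond exactly to $\alpha_*$ vanishing on $K_{k-1}$ and inducing the identity on $K_{k-2}$, and verifying that Yoneda transports these relations in both directions. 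A secondary point is the construction of a representable resolution of an $\mathcal{FP}_\infty$ module, for which additivity of $\cC$ and the $\mathcal{FP}_m$-syzygy lemma are used; alternatively one may pass to $\operatorname{Idem}(\cC_{\oplus})$ through Proposition \ref{eq}, where idempotents split and every finitely generated projective is representable.
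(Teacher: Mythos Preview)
Your proposal is correct and follows essentially the same approach as the paper: both reduce condition (2)(i) to $n$-coherence via \cite[Proposition~5.4]{BOPM}, and both translate condition (2)(ii) through Yoneda into the statement that some syzygy in the induced projective resolution is a direct summand of a representable, hence projective. The only cosmetic differences are that the paper associates to $f$ the module $\coker(f_*)$ rather than your $\im(f_*)$ (a harmless syzygy shift), and writes the endomorphism as $\alpha=\id_{x_{k-1}}-f_k\circ h$ for a suitable $h\colon x_{k-1}\to x_k$ instead of your $\alpha_*=s\circ\pi$; these two formulas produce the same map.
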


\begin{proof} 
 $(1\Rightarrow 2)$ Suppose that $\cC$ is right regular $n$-coherent.  First, we note that $(i)$ is clear by \cite[Prop 5.4]{BOPM}. Now suppose that $f:x\rightarrow y$ is a morphism in $\cC$ with a pseudo $\infty$-kernel $(\cdots,f_{3},f_{2},f_{1})$. Here, we let $f_0:=f$. Thus $\coker(f_*)$ is of type $\mathcal{FP}_{\infty}$ in $\operatorname{Fun}(\cC^{op}, \operatorname{Ab})$ because there exists an exact sequence of the form
$$\cdots \xrightarrow{f_{2*}} \hom_\cC(-,x_1)\xrightarrow{f_{1*}} \hom_\cC(-,x)\xrightarrow{f_*} \hom_\cC(-,y)\rightarrow \coker(f_*)\rightarrow 0.$$ 
 There exists $k\in\mathbb{N}$ such that $\coker(f_*)$ has  projective dimension $\leq k.$  It implies that $\ker(f_{k-2*})=\im(f_{k-1*})$ is projective, and therefore $\ker(f_{k-1*})=\im(f_{k*})$ is projective too. Consider
 $$\xymatrix{
\cdots\rightarrow \hom_\cC(-,x_{k})\ar@{->>}[rd]^{\sigma} \ar[r]^{f_{k*}}
& \hom_\cC(-,x_{k-1}) \rightarrow\cdots
&  \\
&\im(f_{k*})\ar@{^{(}->}[u]^{\iota}}$$
where $\iota:\im(f_{k*})\hookrightarrow \hom_\cC(-,x_{k-1})$ and $\sigma:\hom_\cC(-,x_{k})\twoheadrightarrow \im(f_{k*})$ are the canonical morphisms. 
There exists $\iota':\im(f_{k*})\rightarrow \hom_\cC(-,x_{k})$  and $\sigma': \hom_\cC(-,x_{k-1})\rightarrow \im(f_{k*})$  such that $\sigma\circ\iota'= \id_{\im(f_{k*})}$ and $\sigma'\circ\iota=\id_{\im(f_{k*})}$. By Yoneda Lemma and using the same techniques \cite[Theorem 5.5]{BOPM} there exists $h:x_{k-1}\rightarrow x_{k}$ in $\cC$ such that $h_*:\hom_\cC(-,x_{k-1})\rightarrow \hom_\cC(-,x_{k})$ satisfy $h_*\circ f_{k*} = \iota'\circ\sigma.$ The morphism $\alpha:= \id_{x_{k-1}}- f_{k}\circ h$ satisfies the desired condition. 

$(2\Rightarrow 1)$ Suppose that the affirmation $(2)$ is satisfied for $n\geq1$. Using the condition $(2$-$i)$, we deduce that $\cC$ is right $n$-coherent \cite[Prop 5.4]{BOPM} and thus $\mathcal{FP}_{n}=\mathcal{FP}_{\infty}.$ Now, for each $F: \cC^{op} \rightarrow{Ab}$ of type $\mathcal{FP}_{n}$  we get an exact sequence of the form
$$ \cdots \rightarrow\hom_\cC(-, x_{n})\rightarrow \cdots\rightarrow \hom_\cC( -,x_1)\xrightarrow{f_{1*}} \hom_\cC( -,x)\xrightarrow{f_*} \hom_\cC( -,y)\rightarrow F\rightarrow 0$$
where $f:x\rightarrow y$ is a morphism in $\cC.$ It implies that $f$ has a pseudo $\infty$-kernel, and therefore, there is  $k\in\mathbb{N}$ and an endomorphism $\alpha:x_{k-1}\rightarrow x_{k-1}$ making the following diagram commute:
$$\xymatrix{
x_k\ar[rd]^{0} \ar[r]^{f_{k}} & x_{k-1}\ar[d]^{\alpha} \ar[r]^{f_{k-1}} & x_{k-2}\\
& x_{k-1}\ar[ru]_{f_{k-1}}}$$
Next, we show that $\im(f_{k-1*})=\ker(f_{k-2*})$ is a projective functor. Consider
$$
\xymatrix{
\hom_{\cC}(-, x_{k}) \ar[ddr]_{0}\ar[r]^{f_{k*}}& \hom_{\cC}(-, x_{k-1})\ar[dr]^{\sigma}\ar[dd]_{\alpha_*} \ar[rr]^{f_{k-1*}}&& \hom_{\cC}(-, x_{k-2}) \ar@{=}[dd]\\
&&\im(f_{k-1*})\ar[ur]^{\iota}&\\
& \hom_{\cC}(-, x_{k-1})\ar[rr]^{f_{k-1*}}&& \hom_{\cC}(-, x_{k-2})
}
$$
where $\sigma:\hom_{\cC}(-,x_{k-1})\rightarrow \im(f_{k-1*})$ and $\iota:\im(f_{k-1*})\rightarrow \hom_{\cC}(-,x_{k-2})$ are the canonical natural transformations.
Note that $\im(f_{k-1*})=\coker(f_{k*})$, then there exists unique natural transformation $t:\im(f_{k-1*})\rightarrow \hom_{\cC}(-,x_{k-1})$ such that $t\circ\sigma=\alpha_*.$ Moreover, applying the same techniques \cite[Theorem 5.5]{BOPM} we have
$$\iota\circ\id_{\im(f_{k-1*})}\circ\sigma=\iota\circ\sigma=f_{k-1*}=(f_{k-1}\circ\alpha)_*=f_{k-1*}\circ\alpha_*= f_{k-1*}\circ t\circ\sigma=\iota\circ\sigma\circ t\circ\sigma$$
which implies that $$\id_{\im(f_{k-1*})}=\sigma\circ\ t.$$ Then $\sigma$ is a split epimorphism, and therefore, $\im(f_{k-1*})$ is projective.
\end{proof}
According to \cite{bl}, a small additive category $\cC$ is considered to be right regular if it satisfies two conditions: it is both right Noetherian and right regular 1-coherent. It´s important to note that this usage of regular should not be confused with  the concept of von Neumann regular. 

\begin{cor} Let $\cC$ be a small additive category. The following are equivalent
\begin{enumerate}
\item[(1)] $\cC$ is  right regular.
\item[(2)] The following conditions hold in $\cC$:
\begin{enumerate}
\item[i)] Every object $c$ in $\cC$ has the following property. Consider any directed set $I$ and collections of morphisms $\lbrace{f_i: a_i\rightarrow c}\rbrace_{i\in I}$ with $c$ as target such that $f_i\subseteq f_j$ holds for $i\leq j$. Then there exists $i_0 \in I$ with $f_i\subseteq f_{i_0}$ for all $i\in I$.
\item[ii)]For every morphism $f:x\rightarrow y$ in $\cC$ with pseudo $\infty$-kernel there exists $k\in\mathbb{N}$ and  $\alpha:x_{k-1}\rightarrow x_{k-1}$ making the following diagram commute:

$$\xymatrix{
x_k\ar[rd]^{0} \ar[r]^{f_{k}} & x_{k-1}\ar[d]^{\alpha} \ar[r]^{f_{k-1}} & x_{k-2} \ar[r]^{f_{k-2}}& \cdots
&\cdots \ar[r]^{f_{2}}& x_{1} \ar[r]^{f_{1}}& x \ar[r]^{f}& y \\
& x_{k-1}\ar[ru]_{f_{k-1}}}$$

\end{enumerate}
\end{enumerate}
\end{cor}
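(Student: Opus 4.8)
The plan is to reduce the corollary to Proposition \ref{char} specialized to $n=1$, together with the definition of right regularity. Recall that, following \cite{bl}, $\cC$ is right regular exactly when it is right Noetherian and right regular coherent, and that right regular coherent is the same as right regular $1$-coherent. The two clauses of condition $(2)$ already line up with this decomposition: clause $(2$-$i)$ is, by \cite[Lemma 5.8]{bl}, precisely the assertion that $\cC$ is right Noetherian, while clause $(2$-$ii)$ is verbatim the condition $(2$-$ii)$ that appears in Proposition \ref{char}. So the whole statement will follow once I can invoke Proposition \ref{char} at $n=1$, whose hypothesis $(2$-$i)$ there reads ``every morphism with a pseudo $0$-kernel has a pseudo $1$-kernel'', i.e. every morphism of $\cC$ admits a pseudo-kernel.

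The one point that genuinely needs an argument is therefore the bridging claim: if $\cC$ is right Noetherian, then every morphism of $\cC$ has a pseudo-kernel. I would prove this by noting that right Noetherian is the same as right $0$-coherent, so the objects of type $\mathcal{FP}_0$ coincide with those of type $\mathcal{FP}_\infty$; since $\mathcal{FP}_\infty\subseteq\mathcal{FP}_1\subseteq\mathcal{FP}_0$, the intermediate class $\mathcal{FP}_1$ must collapse to $\mathcal{FP}_\infty$ as well, whence $\cC$ is right $1$-coherent. By \cite[Proposition 5.4]{BOPM}, right $1$-coherence is equivalent to the existence of a pseudo $1$-kernel for every morphism (each of which is its own pseudo $0$-kernel), which is exactly the claim. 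One can also see it concretely: for $f\colon x\to y$ the subfunctor $\ker(f_*)\subseteq\hom_\cC(-,x)$ is finitely generated by Noetherianity, and any finite generating family assembles, in the additive category $\cC$, into a morphism whose induced map has image $\ker(f_*)$, that is, a pseudo-kernel of $f$.

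With the bridging claim in hand the two implications are immediate. For $(1)\Rightarrow(2)$, right regularity gives right Noetherianity, which is $(2$-$i)$ by \cite[Lemma 5.8]{bl}, and right regular $1$-coherence, which by Proposition \ref{char} yields in particular its condition $(2$-$ii)$, i.e. $(2$-$ii)$ here. For $(2)\Rightarrow(1)$, clause $(2$-$i)$ makes $\cC$ right Noetherian, and by the bridging claim every morphism then has a pseudo-kernel; combined with $(2$-$ii)$ these are precisely the two hypotheses of Proposition \ref{char} at $n=1$, so $\cC$ is right regular $1$-coherent. Being right Noetherian as well, $\cC$ is right regular.

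I expect the only real obstacle to be the bridging claim, which is where the special feature of the $n=0$ case enters: one must know that $0$-coherence forces $1$-coherence, so that the explicit pseudo-kernel hypothesis of Proposition \ref{char} can be replaced by the stronger Noetherian clause $(2$-$i)$. Everything else is a bookkeeping match between the stated conditions and those of Proposition \ref{char} and \cite[Lemma 5.8]{bl}.
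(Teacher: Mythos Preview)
Your proposal is correct and matches the paper's intended argument: the corollary is stated without proof precisely because it is meant to be the specialization of Proposition~\ref{char} to $n=1$ combined with \cite[Lemma~5.8]{bl}, and the only point requiring justification is the bridging claim that right Noetherian implies the existence of pseudo-kernels, which you supply cleanly via $0$-coherence $\Rightarrow$ $1$-coherence.
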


In \cite{bl} another type of regularity is introduced due to bad behavior of regularity with respect to infinity products. 
Let $R$ be a ring with unity. Specifically, $R$ is right \emph{$l$-uniformly regular coherent}, if every finitely presented right $R$-module $M$ admits a $l$-dimensional finite projective resolution, i.e. there exists an exact sequence  
$$0\rightarrow P_{l}\rightarrow P_{l-1}\rightarrow\cdots\rightarrow P_0\rightarrow M\rightarrow 0$$ where each $P_i$ is finitely generated and projective right $R$-module. This concept is extended to additive categories in \cite[Section 6]{bl}. 
Let $\cC$ be a $\Z$-linear category  and $l\geq 1$. We say that $\cC$ is right \emph{$l$-uniformly regular coherent}, if every right  $\cC$-module $F$ of type $\mathcal{FP}_{1}$  admits a $l$-dimensional finite projective resolution, i.e. there exists an exact sequence 
$$0\rightarrow P_{l}\rightarrow P_{l-1}\rightarrow\cdots\rightarrow P_0\rightarrow F\rightarrow 0$$ where each $P_i$ is finitely generated and projective right $\cC$-module. 

The equivalence  $\operatorname{Fun}(\cC^{op}, \operatorname{Ab})\simeq \operatorname{Fun}(\cC^{op}_{\oplus}, \operatorname{Ab})$ implies that $\cC$ is right $l$-uniformly regular coherent if and only if $\cC_{\oplus}$  is right $l$-uniformly regular coherent. Note that, if $\cC$ is right $1$-coherent and every right $\cC$-module $F$ of type $\mathcal{FP}_{1}$ has a projective dimension $\leq l$ then $\cC$ is right $l$-uniformly regular coherent. 

\begin{cor} Let $l\geq 1$ and let $\cC$ be a small additive category. Suppose that $\cC$ is right $1$-coherent. Then, the following are equivalent:
\begin{enumerate}
\item[(1)] $\cC$ is right $l$-uniformly regular coherent.
\item[(2)] For every morphism $f:x\rightarrow y$ in $\cC$ there exists $l\in\mathbb{N}$, an pseudo $l$-kernel $(f_l,f_{l-1},\cdots ,f_1)$ of $f$ and $\alpha:x_{l-1}\rightarrow x_{l-1}$ making the following diagram commute:

$$\xymatrix{
x_l\ar[rd]^{0} \ar[r]^{f_{l}} & x_{l-1}\ar[d]^{\alpha} \ar[r]^{f_{l-1}} & x_{l-2} \ar[r]^{f_{l-2}}& \cdots
&\cdots \ar[r]^{f_{2}}& x_{1} \ar[r]^{f_{1}}& x \ar[r]^{f}& y \\
& x_{l-1}\ar[ru]_{f_{l-1}}}$$

\end{enumerate}
\end{cor}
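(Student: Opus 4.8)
The plan is to collapse both (1) and (2) to the single homological statement that every object of type $\mathcal{FP}_1$ in $\operatorname{Fun}(\cC^{op},\operatorname{Ab})$ has projective dimension at most $l$, and then to recognise (2) as the ``level-$l$'' incarnation of this bound, mirroring the argument of Proposition \ref{char} but with the index fixed to $l$ throughout. First I would record the standing reductions. Since $\cC$ is right $1$-coherent, every morphism admits a pseudo kernel, so iterating produces for each $f\colon x\to y$ a pseudo $\infty$-kernel; consequently $\coker(f_*)$ is of type $\mathcal{FP}_\infty$ and carries a projective resolution by representables $\cdots\to\hom_\cC(-,x_1)\xrightarrow{f_{1*}}\hom_\cC(-,x)\xrightarrow{f_*}\hom_\cC(-,y)\to\coker(f_*)\to 0$, while conversely every $\mathcal{FP}_1$ object is such a $\coker(f_*)$, exactly as in the proof of Proposition \ref{char}. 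I would then use the two facts already on the page: an $l$-dimensional finite projective resolution bounds projective dimension by $l$, and (by the remark preceding the statement) right $1$-coherence together with the uniform bound ``projective dimension $\le l$ on all $\mathcal{FP}_1$ objects'' gives right $l$-uniform regular coherence. Thus (1) is equivalent to the uniform bound, and it suffices to match the uniform bound with (2).

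For $(1)\Rightarrow(2)$ I would fix $f$, truncate its pseudo $\infty$-kernel to a pseudo $l$-kernel $(f_l,\dots,f_1)$, and use the uniform bound to conclude that the $l$-th syzygy $\im(f_{(l-1)*})$ of $\coker(f_*)$ is projective. The short exact sequence $0\to\im(f_{l*})\to\hom_\cC(-,x_{l-1})\to\im(f_{(l-1)*})\to 0$ then splits, so $\im(f_{l*})$ is projective as well; this splits both the canonical epimorphism $\sigma\colon\hom_\cC(-,x_l)\twoheadrightarrow\im(f_{l*})$ and the canonical monomorphism $\iota\colon\im(f_{l*})\hookrightarrow\hom_\cC(-,x_{l-1})$, say through $\iota'$ and $\sigma'$. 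The composite $\iota'\circ\sigma'\colon\hom_\cC(-,x_{l-1})\to\hom_\cC(-,x_l)$ is a map of representables, so by Yoneda (as in \cite[Theorem 5.5]{BOPM}) it equals $h_*$ for some morphism $h\colon x_{l-1}\to x_l$, and a short computation gives $f_{l*}h_*f_{l*}=f_{l*}$, hence $f_l\circ h\circ f_l=f_l$. Setting $\alpha:=\id_{x_{l-1}}-f_l\circ h$ yields $\alpha\circ f_l=0$ and, since $f_{l-1}\circ f_l=0$, also $f_{l-1}\circ\alpha=f_{l-1}$, which is exactly the commutativity of the diagram in (2) at the fixed level $l$.

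For $(2)\Rightarrow(1)$ I would present an arbitrary $\mathcal{FP}_1$ object as $\coker(f_*)$ and feed $f$ into (2), obtaining a pseudo $l$-kernel and an $\alpha\colon x_{l-1}\to x_{l-1}$ with $\alpha\circ f_l=0$ and $f_{l-1}\circ\alpha=f_{l-1}$. Following the argument of Proposition \ref{char} for $(2\Rightarrow1)$: exactness of the pseudo $l$-kernel at $x_{l-1}$ identifies $\im(f_{(l-1)*})$ with $\coker(f_{l*})$, so $\alpha_*$ factors as $t\circ\sigma$ through $\sigma\colon\hom_\cC(-,x_{l-1})\twoheadrightarrow\im(f_{(l-1)*})$, and the relation $f_{(l-1)*}=(f_{l-1}\circ\alpha)_*$ forces $\sigma\circ t=\id$. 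Hence $\sigma$ splits and $\im(f_{(l-1)*})$ is projective, i.e. $\coker(f_*)$ has projective dimension at most $l$; truncating the resolution at this projective $l$-th syzygy even exhibits an explicit $l$-dimensional finite projective resolution. As every $\mathcal{FP}_1$ object is some $\coker(f_*)$, the uniform bound holds and the framing remark yields (1). I expect the only real obstacle to be bookkeeping rather than conceptual: I must check that the splitting maps and the Yoneda-produced $h$ all live at the single index $l$ rather than at an unspecified $k$ as in Proposition \ref{char}, so that the commuting triangle and the projective syzygy sit at the same spot in the resolution. Once the index is pinned, the chain ``$\im(f_{(l-1)*})$ projective $\iff$ $\sigma$ splits $\iff$ $\alpha$ exists'' is routine.
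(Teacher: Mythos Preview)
Your proposal is correct and matches the paper's intent: the corollary is stated without proof, and it is meant to follow from Proposition~\ref{char} by running the same argument with the index fixed at $l$ (using the standing $1$-coherence to produce the needed pseudo $\infty$-kernel and the remark preceding the statement to convert the uniform bound on projective dimension into $l$-uniform regular coherence). Your construction of $h$ via $h_* = \iota'\circ\sigma'$ through Yoneda is in fact a slight cleanup of the paper's phrasing in Proposition~\ref{char}, which records the equivalent relation $h_*\circ f_{l*}=\iota'\circ\sigma$; both give $f_l h f_l = f_l$ and hence the required $\alpha$.
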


\section{The ring $\cA(\cC)$ and the $\Z$-linear category $\cC$}\label{sec3}
In this section we study the relation between some properties of a $\Z$-linear category $\cC$ with the properties of a ring $\cA(\cC)$ associated with it. We prove the categories $\operatorname{Fun}(\cC^{op},\operatorname{Ab})$ and $\operatorname{Mod}$-$\cA(\cC)$
are equivalent.

 \subsection{The ring $\cA(\cC)$}
Let $\cC$ be a $\Z$-linear category. Recall from \cite{ce}
\begin{equation}\label{ac}
\cA(\cC)=\bigoplus_{a,b\in\obc}\hom_\cC(a,b).
\end{equation}
If $f\in \cA(\cC)$ write $f_{a,b}$ for the component in
$\hom_\cC(b,a)$. The following multiplication law
\begin{equation}\label{rule:matrix}
(fg)_{a,b}=\sum_{c\in\obc}f_{a,c}g_{c,b}
\end{equation}
makes $\cA(\cC)$ into an associative ring, which is unital if and only if $\obc$ is finite. Whatever the cardinal
of $\obc$ is, $\cA(\cC)$ is always a ring with {\it local units}, i.e. a filtering colimit of unital rings.

\subsection{The $\Z\cC$-modules}

Recall that $M$ is a unital right $\cA(\cC)$-module if $M\cdot \cA(\cC)=M$. Consider 
$\operatorname{Mod}$-$\cA(\cC)$ the category of unital right $\cA(\cC)$-modules. Let us define functors
$$
\cS(-): \operatorname{Fun}(\cC^{op}, \operatorname{Ab}) \rightarrow \mbox{$\operatorname{Mod}$-$\cA(\cC)$} \qquad
(-)_{\cC}: \mbox{$\operatorname{Mod}$-$\cA(\cC)$}  \rightarrow \operatorname{Fun}(\cC^{op}, \operatorname{Ab})$$
Let $M \in \operatorname{Fun}(\cC^{op}, \operatorname{Ab})$
$$
\cS(M)=\bigoplus _{a\in \obc}M(a) 
$$
Let $N\in \mbox{$\operatorname{Mod}$-$\cA(\cC)$}$
$$
N_{\cC}:\cC^{op} \rightarrow  \operatorname{Ab} \qquad a \mapsto N\cdot \id_{a}.
$$

\begin{lem}
If $N$ is a unital right $\cA(\cC)$-module then $$\bigoplus_{a\in \obc} N\cdot \id_{a} = N.$$
\end{lem}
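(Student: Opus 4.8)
The plan is to exploit that the identity endomorphisms $\{\id_a\}_{a\in\obc}$ form a family of pairwise orthogonal idempotents in $\cA(\cC)$, whose finite partial sums serve as local units. Granting this, the statement splits into two independent assertions: that the internal sum $\sum_{a\in\obc} N\cdot\id_a$ is \emph{direct}, and that it exhausts $N$. I would treat these separately.

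First I would record the idempotent structure. From the multiplication rule \eqref{rule:matrix}, the only nonzero component of $\id_a$ lies in $\hom_\cC(a,a)$, so a direct computation gives $\id_a\cdot\id_b=\delta_{a,b}\,\id_a$; in particular each $\id_a$ is idempotent and distinct ones are orthogonal. Directness then follows formally: if $\sum_{a} n_a\id_a=0$ is a finite relation with $n_a\in N$, right-multiplying by $\id_b$ and using orthogonality leaves only the single term $n_b\id_b$, forcing each summand $n_b\id_b=0$. Hence the $N\cdot\id_a$ meet trivially and the sum is internal-direct.

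For the spanning assertion I would invoke the unitality hypothesis $N=N\cdot\cA(\cC)$ together with the local-unit structure of $\cA(\cC)$. Given $n\in N$, write $n=\sum_i m_i r_i$ with $m_i\in N$ and $r_i\in\cA(\cC)$. Each $r_i$ has finite support, so there is a finite set $S\subseteq\obc$ containing every object occurring in any component of any $r_i$; setting $e_S:=\sum_{a\in S}\id_a$, which is idempotent as a finite sum of orthogonal idempotents, one checks from \eqref{rule:matrix} that $r_i\,e_S=r_i$ for all $i$. Consequently $n\cdot e_S=\sum_i m_i r_i e_S=n$, and therefore $n=n\,e_S=\sum_{a\in S} n\,\id_a\in\sum_{a\in\obc} N\cdot\id_a$. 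Combined with directness this yields $N=\bigoplus_{a\in\obc} N\cdot\id_a$.

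The step I expect to require the most care is verifying $r_i\,e_S=r_i$ from the matrix multiplication law: right multiplication by $\id_c$ retains exactly those components of $r_i$ whose source index equals $c$, so one must choose $S$ to capture all source objects appearing in the $r_i$ (including their target objects is harmless but unnecessary). Everything else is routine bookkeeping with orthogonal idempotents and the finiteness of supports in $\cA(\cC)=\bigoplus_{a,b}\hom_\cC(a,b)$.
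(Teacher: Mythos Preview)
Your argument is correct and follows essentially the same route as the paper: write $n=\sum_i m_i r_i$ using unitality, collect the finitely many source objects into a set $S$, and observe that the finite idempotent $e_S=\sum_{a\in S}\id_a$ acts as the identity on each $r_i$, so $n=n\,e_S\in\sum_a N\cdot\id_a$. The only difference is that you also spell out the directness of the sum via the orthogonality $\id_a\id_b=\delta_{a,b}\id_a$, a point the paper leaves implicit.
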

\begin{proof}
For every $a \in \obc$ we have $N\cdot \id_{a} \subseteq N$ then 
$\bigoplus_{a\in \obc}N\cdot \id_{a} \subseteq N$.
Let $n\in N$, because $N$ is unital $N=N\cdot \cA(\cC)$ then $n=\sum^{i=m}_{i=1} n_{i}\cdot f_{i}$ with $n_{i}\in  N$ and $f_{i}\in \hom_{\cC}(a_{i},b_{i})$.
Let $I=\{a  \in \obc: a=a_{i}, \mbox{for some $i=1,\ldots, m$}\}$ then
$$
n= \sum^{i=m}_{i=1} n_{i}\cdot f_{i} = (\sum^{i=m}_{i=1} n_{i}\cdot f_{i})\cdot(\sum_{a\in I} \id_{a})= n\cdot \sum_{a\in I} \id_{a}
$$
We conclude $N \subseteq  \bigoplus_{a\in \obc}N\cdot \id_{a}$.
\end{proof}
\begin{prop}\label{propeq}
Let $\cC$ be a $\Z$-linear category then $$
\cS(-): \operatorname{Fun}(\cC^{op}, \operatorname{Ab}) \rightarrow \mbox{$\operatorname{Mod}$-$\cA(\cC)$} \qquad
(-)_{\cC}: \mbox{$\operatorname{Mod}$-$\cA(\cC)$}  \rightarrow \operatorname{Fun}(\cC^{op}, \operatorname{Ab})$$
are an equivalence of categories.  
\end{prop}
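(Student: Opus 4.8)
The plan is to show that $\cS(-)$ and $(-)_\cC$ are mutually quasi-inverse equivalences by verifying that both composites are naturally isomorphic to the respective identity functors. The two preceding lemmas are doing the real work: the second lemma tells us that any unital right $\cA(\cC)$-module $N$ decomposes as $N=\bigoplus_{a\in\obc}N\cdot\id_a$, which is exactly the statement that $\cS((-)_\cC)\cong\id$ on the $\operatorname{Mod}$-$\cA(\cC)$ side, while the first lemma (on matrix descriptions) is available but not strictly needed here. So the crux is to set up the natural transformations carefully and check functoriality and compatibility with the $\cA(\cC)$-action.

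\medskip

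\emph{First I would verify that the two assignments are well-defined on objects and extend to functors.} For $M\in\operatorname{Fun}(\cC^{op},\operatorname{Ab})$, the group $\cS(M)=\bigoplus_{a\in\obc}M(a)$ carries a right $\cA(\cC)$-action: an element $f\in\hom_\cC(a,b)$ (a component of $\cA(\cC)$) sends the summand $M(b)$ into $M(a)$ via the map $M(f)$, and annihilates the other summands; bilinearity of composition in $\cC$ guarantees this is an associative action, and one checks $\cS(M)\cdot\cA(\cC)=\cS(M)$ so the module is unital. For $N\in\operatorname{Mod}$-$\cA(\cC)$, the functor $N_\cC$ sends $a\mapsto N\cdot\id_a$, and a morphism $g\in\hom_\cC(a',a)$ acts by right multiplication $N\cdot\id_a\to N\cdot\id_{a'}$, $x\mapsto x\cdot g$ (here $g$ is viewed in the $(a,a')$-component of $\cA(\cC)$, so $\id_{a'}\cdot g\cdot\id_a=g$ lands correctly); this is contravariant in $\cC$. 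Both constructions are visibly functorial in $M$ and $N$ respectively.

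\medskip

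\emph{Next I would construct the two natural isomorphisms.} For the composite $(-)_\cC\circ\cS$, given $M$ I must compare $(\cS M)_\cC(a)=\cS(M)\cdot\id_a$ with $M(a)$; but $\cS(M)\cdot\id_a$ is precisely the summand $M(a)$ inside $\bigoplus_{b}M(b)$, since $\id_a$ projects onto the $a$-component, giving a natural isomorphism $(\cS M)_\cC\cong M$. For the composite $\cS\circ(-)_\cC$, given $N$ I have $\cS(N_\cC)=\bigoplus_{a\in\obc}N\cdot\id_a$, and the second lemma supplies exactly the isomorphism $\bigoplus_{a\in\obc}N\cdot\id_a\cong N$ as abelian groups; it remains only to check this identification is $\cA(\cC)$-linear and natural in $N$, which follows because the $\cA(\cC)$-action on $\cS(N_\cC)$ is defined so as to match right multiplication in $N$.

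\medskip

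\emph{The step I expect to be the main obstacle} is the bookkeeping of the $\cA(\cC)$-module structure on $\cS(M)$ and the verification that the isomorphism $\cS(N_\cC)\cong N$ respects the actions rather than being merely an isomorphism of underlying abelian groups. The indexing conventions for the components $f_{a,b}$ and the convention in \eqref{rule:matrix} must be tracked with care so that the left/right and source/target conventions align; once the action is pinned down correctly, naturality and $\cA(\cC)$-linearity are formal. Everything else reduces to the decomposition already established in the second lemma, so the proof is essentially a matter of assembling these pieces.
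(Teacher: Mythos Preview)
Your proposal is correct and follows essentially the same approach as the paper: you verify that $(\cS M)_\cC(a)=\cS(M)\cdot\id_a=M(a)$ and use the preceding lemma to get $\cS(N_\cC)=\bigoplus_a N\cdot\id_a=N$, which is exactly what the paper does. Your additional care about well-definedness of the $\cA(\cC)$-action, naturality, and $\cA(\cC)$-linearity of the identifications fills in details the paper leaves implicit.
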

\begin{proof}
Let $N\in \mbox{$\operatorname{Mod}$-$\cA(\cC)$}$ and  $M\in \operatorname{Fun}(\cC^{op}, \operatorname{Ab}) $ then 
$$
S(N_{\cS})= \bigoplus_{a\in \obc} N_{\cC}(a) = \bigoplus_{a\in \obc} N\cdot \id_{a} = N
$$
$$
(S(M))_{\cC}(c)=S(M)\cdot \id_{c}=\bigoplus _{a \in \obc} M(a)\cdot \id_{c}=M(c) \qquad \forall c\in \obc
.$$
\end{proof}

The abelian structure of $\operatorname{Fun}(\cC^{op}, \operatorname{Ab})$ comes from the abelian structure in $\operatorname{Ab}$. A sequence $M\xrightarrow{f} N \xrightarrow{g} R$ is exact in $\operatorname{Fun}(\cC^{op}, \operatorname{Ab})$ if for each object $c\in \cC$ the sequence $M(c)\xrightarrow{f(c)} N(c) \xrightarrow{g(c)} R(c)$  is exact in  $\operatorname{Ab}$.

\begin{prop}\label{propsc}
Let $\cC$ be a $\Z$-linear category then $$
\cS(-): \operatorname{Fun}(\cC^{op}, \operatorname{Ab}) \rightarrow \mbox{$\operatorname{Mod}$-$\cA(\cC)$} \qquad
(-)_{\cC}: \mbox{$\operatorname{Mod}$-$\cA(\cC)$}  \rightarrow \operatorname{Fun}(\cC^{op}, \operatorname{Ab})$$
are exact functors. 
\end{prop}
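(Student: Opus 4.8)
The plan is to verify exactness directly on underlying abelian groups, since in both categories a sequence is exact precisely when it is exact after passing to $\operatorname{Ab}$. For $\operatorname{Fun}(\cC^{op},\operatorname{Ab})$ this is the objectwise description recalled just above; for $\mbox{$\operatorname{Mod}$-$\cA(\cC)$}$ a sequence of unital right modules is exact if and only if it is exact as a sequence of abelian groups, because kernels, images and cokernels of maps of unital modules are computed at the level of abelian groups (and remain unital, using the local units of $\cA(\cC)$). One could alternatively deduce the proposition from Proposition \ref{propeq}, since an additive equivalence between abelian categories is automatically exact; but the hands-on argument is more transparent and simultaneously checks that the two notions of exactness are compatible.

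First I would treat $\cS$. Let $M\xrightarrow{f} N\xrightarrow{g} R$ be exact in $\operatorname{Fun}(\cC^{op},\operatorname{Ab})$, so that $M(a)\xrightarrow{f(a)} N(a)\xrightarrow{g(a)} R(a)$ is exact in $\operatorname{Ab}$ for every $a\in\obc$. Since $\cS(-)=\bigoplus_{a\in\obc}(-)(a)$ is a coproduct of the evaluation functors, applying $\cS$ yields $\bigoplus_a M(a)\to\bigoplus_a N(a)\to\bigoplus_a R(a)$, and exactness is immediate because a direct sum of exact sequences of abelian groups is again exact (coproducts are exact in $\operatorname{Ab}$): an element of the middle kernel has only finitely many nonzero components, each lying in a kernel hence in the corresponding image, so the whole element is in the image. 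Thus $\cS$ is exact.

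Next I would treat $(-)_{\cC}$, where the real content lies. Since $(N_{\cC})(a)=N\cdot\id_a$ and exactness in $\operatorname{Fun}(\cC^{op},\operatorname{Ab})$ is objectwise, it suffices to show that for each fixed $a\in\obc$ the functor $N\mapsto N\cdot\id_a$ into $\operatorname{Ab}$ is exact. The key point is that $\id_a$ is an idempotent of $\cA(\cC)$, i.e. $\id_a^{2}=\id_a$. Given an exact sequence $M\xrightarrow{f} N\xrightarrow{g} R$ of unital modules I would check three things: (i) $f,g$ restrict to maps $M\id_a\to N\id_a\to R\id_a$, since $f(m\id_a)=f(m)\id_a$ and likewise for $g$; (ii) the composite vanishes because $gf=0$; and (iii) exactness in the middle: if $n\id_a\in N\id_a$ satisfies $g(n\id_a)=0$, then $n\id_a\in\ker g=\im f$, so $n\id_a=f(m)$ for some $m\in M$, and multiplying by the idempotent gives $n\id_a=(n\id_a)\id_a=f(m)\id_a=f(m\id_a)$ with $m\id_a\in M\id_a$.

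The main obstacle is that $\cA(\cC)$ is in general non-unital (it has only local units, and is unital exactly when $\obc$ is finite), so one cannot split off $N\id_a$ as a direct summand via a complement $1-\id_a$. This is precisely what the idempotency $\id_a^{2}=\id_a$ circumvents in step (iii): the relation $(n\id_a)\id_a=n\id_a$ pulls the chosen preimage back into $M\id_a$ without any appeal to a global unit. Assembling (i)--(iii) gives exactness of $N\mapsto N\id_a$ for every $a\in\obc$, hence exactness of $(-)_{\cC}$, which completes the proof.
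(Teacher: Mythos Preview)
Your proof is correct and follows essentially the same route as the paper: both argue objectwise, using that a direct sum of exact sequences in $\operatorname{Ab}$ is exact for $\cS$, and using the idempotency $\id_a^{2}=\id_a$ to pull a preimage back into $M\cdot\id_a$ for $(-)_{\cC}$. The only cosmetic differences are the order in which the two functors are treated and your additional remark that exactness could alternatively be read off from the equivalence of Proposition~\ref{propeq}.
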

\begin{proof}
Let $M\xrightarrow{f} N \xrightarrow{g} R$ be an exact sequence in $\operatorname{Mod}$-$\cA(\cC)$. Let us prove $M_{\cC}\xrightarrow{f_{\cC}} N_{\cC} \xrightarrow{g_{\cC}} R_{\cC}$  is exact in $\operatorname{Fun}(\cC^{op}, \operatorname{Ab})$ showing $M_{\cC}(a)\xrightarrow{f_{\cC}(a)} N_{\cC}(a) \xrightarrow{g_{\cC}(a)} R_{\cC}(a)$ is exact for every object $a$ in $\cC$.
By functoriality $\im(f_{\cC}(a)) \subseteq \ker (g_{\cC}(a))$. Let $n\cdot \id_{a} \in \ker (g_{\cC}(a))$ then 
$$g_{\cC}(a)(n\cdot \id_{a})=g(n)\cdot \id_{a}=g(n\cdot \id_{a})=0$$
then $n\cdot \id_{a} \in \ker(g)=\im (f)$. There exists $m\in M$ such that $f(m)=n\cdot \id_{a}$ then
$$
f_{\cC}(a)(m\cdot \id_{a})= f(m\cdot \id_{a})=f(m)\cdot \id_{a} = (n\cdot \id_{a})\cdot \id_{a}= n \cdot \id_{a}
$$
then  $n \cdot \id_{a}\in \im(f_{\cC}(a))$. We conclude $(-)_{\cC}$ is exact. 

We proceed to show that $S$ is exact. Let $M\xrightarrow{f} N \xrightarrow{g} R$ be an exact sequence in $\operatorname{Fun}(\cC^{op}, \operatorname{Ab})$. Consider 
 $$
 \displaystyle S(M) = \bigoplus_{a\in \obc} M(a)\xrightarrow{S(f)} S(N) = \bigoplus_{a\in \obc} N(a) \xrightarrow{S(g)} S(R)=\bigoplus_{a\in \obc} R(a)
 $$
 Similarly as above, let $\sum _{a\in\cC} x_{a} \in \ker S(g)$ then 
 $$
 \begin{array}{lll}
 S(g)(\sum _{a\in\cC} x_{a}) =\sum_{a\in\cC}g(a)(x_{a})=0 & \Rightarrow & g(a)(x_{a})=0 \qquad \forall x_{a}\in N(a) \\
 & \Rightarrow & x_{a}\in \ker g(a)=\im f(a) \qquad \forall x_{a}\in N(a)\\
&\Rightarrow  & \exists y_{a} \in M(a) \mbox{ such that } f(a)(y_{a})=x_{a}
 \end{array}
 $$
 \end{proof}

\begin{cor}\label{propfunc}
Let $\cC$ be a $\Z$-linear category.
\begin{enumerate}
\item If $p: M \rightarrow N$ is an epimorphism in $\operatorname{Mod}$-$\cA(\cC)$ then $p_{\cC}:M_{\cC} \rightarrow N_{\cC}$ is an epimorphism in $\operatorname{Fun}(\cC^{op}, \operatorname{Ab})$. 
\item  If $\pi: M \rightarrow N$ is an epimorphism in $\operatorname{Fun}(\cC^{op}, \operatorname{Ab})$ then $\cS(\pi) : \cS(M)  \rightarrow \cS(N)$ is an epimorphism in $\operatorname{Mod}$-$\cA(\cC)$. 
\item $(M \oplus N)_{\cC} = M_{\cC}\oplus N_{\cC}$ in $\operatorname{Fun}(\cC^{op}, \operatorname{Ab})$.
\item $\cS(M \oplus N) = \cS(M) \oplus \cS(N)$ in $\operatorname{Mod}$-$\cA(\cC)$.
\end{enumerate}
\end{cor}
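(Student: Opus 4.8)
The plan is to deduce all four assertions from the two structural results already in hand, namely that $\cS$ and $(-)_\cC$ are exact (Proposition \ref{propsc}) and that they form an equivalence of categories (Proposition \ref{propeq}); no new construction is needed, so the work is bookkeeping rather than genuine difficulty.

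For parts (1) and (2) I would use the characterization of epimorphisms in an abelian category. A morphism $f\colon A\to B$ is an epimorphism precisely when $\coker f=0$, equivalently when the sequence $A\xrightarrow{f} B\to 0$ is exact. Since both $\cS$ and $(-)_\cC$ are exact by Proposition \ref{propsc}, applying either functor to such a sequence again yields an exact sequence ending in $0$, which says exactly that the image of $f$ under the functor is an epimorphism. Thus $(-)_\cC$ sends the epimorphism $p\colon M\to N$ to an epimorphism $p_\cC\colon M_\cC\to N_\cC$, and $\cS$ sends $\pi$ to the epimorphism $\cS(\pi)$.

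For parts (3) and (4) I would argue by direct computation, noting that both functors are additive straight from their definitions and that finite direct sums in both categories are computed objectwise. For $(-)_\cC$ one evaluates at an object $a\in\obc$: by definition $(M\oplus N)_\cC(a)=(M\oplus N)\cdot\id_a=(M\cdot\id_a)\oplus(N\cdot\id_a)=M_\cC(a)\oplus N_\cC(a)$, and these identifications are natural in $a$, giving $(M\oplus N)_\cC=M_\cC\oplus N_\cC$. For $\cS$ one uses that the defining direct sum commutes with the binary direct sum, $\cS(M\oplus N)=\bigoplus_{a\in\obc}(M\oplus N)(a)=\bigoplus_{a\in\obc}\bigl(M(a)\oplus N(a)\bigr)=\cS(M)\oplus\cS(N)$, the middle equality being that direct sums in $\operatorname{Fun}(\cC^{op},\operatorname{Ab})$ are formed objectwise.

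Alternatively, all four parts follow at once from Proposition \ref{propeq}, since an equivalence of categories preserves every categorical notion, in particular epimorphisms and coproducts. I would nonetheless present the explicit arguments above, as they keep the module-theoretic content transparent. There is no real obstacle here; the only point requiring a moment's care is checking that the splitting $(M\oplus N)\cdot\id_a=(M\cdot\id_a)\oplus(N\cdot\id_a)$ is compatible with the functorial restriction maps, so that the isomorphism in (3) is genuinely natural in $a$. This is where I expect the small amount of honest verification to lie, and it is immediate from the bilinearity of the $\cA(\cC)$-action.
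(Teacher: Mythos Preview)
Your proposal is correct and matches the paper's approach: the paper states this as a corollary with no proof, clearly intending it as an immediate consequence of Proposition~\ref{propsc} (exactness) together with Proposition~\ref{propeq} (equivalence), which is exactly what you spell out. Your explicit verifications for parts (3) and (4) in fact go beyond what the paper records.
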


Let $A$ be a ring with local units. From \cite{wis} we recall that an $A$-module is {\it quasi-free}  if it is isomorphic to a direct sum of modules of the form $e\cdot A$ with $e^{2}=e$, $e\in A$. 
Quasi-free modules over a ring with local units play the same role as free modules over a ring with unity.
Also recall that $M$ is a finitely generated module if and only if it is an image of a finitely generated quasi-free module. A finitely generated module $M$ is projective  if and only if it is a direct summand of a finitely generated quasi-free modules. 
In this paper we work with $A=\cA(\cC)$ and we say that $M$ is a quasi-free right $\cA(\cC)$-module if it is isomorphic to a finite sum of modules $\id_{a}\cdot \cA(\cC)$.

\begin{lem}\label{lempl}Let $\cC$ be a $\Z$-linear category.
\begin{enumerate}
\item If $F$ is a free finitely generated right $\cC$-module, then $S(F)$ is a quasi-free finitely generated right $\cA(\cC)$-module.
\item If $P$ is a projective finitely generated right $\cC$-module, then $S(P)$ is projective and  finitely generated right $\cA(\cC)$-module.
\item If $M$ is a quasi-free finitely generated right $\cA(\cC)$-module then $M_{\cC}$ is a finitely generated free right $\cC$ module.
\item \label{cinco} If $P$ is a projective finitely generated right $\cA(\cC)$-module then  $P_{\cC}$ is projective and finitely generated right $\cC$-module.
\end{enumerate}
\end{lem}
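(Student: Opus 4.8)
The plan is to reduce all four statements to a single computation: for each object $c\in\obc$, the functor $\cS$ carries the representable $\hom_\cC(-,c)$ to the principal module $\id_c\cdot\cA(\cC)$. Unwinding the definitions, $\cS(\hom_\cC(-,c))=\bigoplus_{a\in\obc}\hom_\cC(a,c)$, while using the multiplication rule \eqref{rule:matrix} together with the indexing convention that the $(a,b)$-component of an element of $\cA(\cC)$ lies in $\hom_\cC(b,a)$, one sees that $\id_c\cdot\cA(\cC)$ consists of exactly the elements supported in the ``row'' $c$, that is $\bigoplus_{b\in\obc}\hom_\cC(b,c)$. First I would verify that the evident identification $\bigoplus_{a\in\obc}\hom_\cC(a,c)\cong\id_c\cdot\cA(\cC)$ is an isomorphism of right $\cA(\cC)$-modules, natural in $c$; this is the one genuinely computational step and amounts to matching the precomposition action on the representable with right multiplication by elements of $\cA(\cC)$ via \eqref{rule:matrix}.

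Granting this identification, parts (1) and (3) follow by additivity. A finitely generated free $F\in\operatorname{Fun}(\cC^{op},\operatorname{Ab})$ has the form $F\cong\bigoplus_{i=1}^{n}\hom_\cC(-,a_i)$, so applying $\cS$ and using that it preserves finite direct sums (Corollary \ref{propfunc}(4)) gives $\cS(F)\cong\bigoplus_{i=1}^{n}\id_{a_i}\cdot\cA(\cC)$, a finitely generated quasi-free module. Symmetrically, a finitely generated quasi-free $M\cong\bigoplus_{i=1}^{n}\id_{a_i}\cdot\cA(\cC)$ is sent by $(-)_{\cC}$ to $\bigoplus_{i=1}^{n}\hom_\cC(-,a_i)$, using that $(-)_{\cC}$ preserves finite direct sums (Corollary \ref{propfunc}(3)); this is a finitely generated free module.

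For parts (2) and (4) I would invoke the characterization recalled from \cite{wis}: a finitely generated module is projective precisely when it is a direct summand of a finitely generated quasi-free module. If $P$ is finitely generated projective in $\operatorname{Fun}(\cC^{op},\operatorname{Ab})$, choose $Q$ with $F:=P\oplus Q$ finitely generated free; then $\cS(F)=\cS(P)\oplus\cS(Q)$ is finitely generated quasi-free by part (1), so $\cS(P)$ is a direct summand of a finitely generated quasi-free module, hence itself finitely generated projective. Part (4) is the mirror image: if $P$ is finitely generated projective over $\cA(\cC)$, write $M:=P\oplus Q$ finitely generated quasi-free, apply $(-)_{\cC}$ to get $M_{\cC}=P_{\cC}\oplus Q_{\cC}$ finitely generated free by part (3), and conclude that $P_{\cC}$ is a direct summand of a finitely generated free module in $\operatorname{Fun}(\cC^{op},\operatorname{Ab})$, hence finitely generated projective.

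The only real obstacle is the module-structure-and-naturality check in the first paragraph. The asymmetry of the indexing convention in \eqref{rule:matrix} makes it easy to transpose a source and a target, so I would pin down explicitly how $\id_c$ acts on the left and how a general $g\in\cA(\cC)$ acts on the right before reading off the claimed identification. Everything after that step is formal, relying only on the equivalence and exactness of $\cS$ and $(-)_{\cC}$ and their compatibility with direct sums (Propositions \ref{propeq}, \ref{propsc} and Corollary \ref{propfunc}).
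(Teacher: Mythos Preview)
Your proposal is correct and follows essentially the same approach as the paper: both arguments rest on the identification $\cS(\hom_\cC(-,c))\cong\id_c\cdot\cA(\cC)$, deduce (1) and (3) by additivity, and obtain (2) and (4) via the direct-summand characterization of projectives. The only difference is expository: you flag the right-module-structure verification for that identification as the one substantive step, whereas the paper simply writes it as an equality without comment.
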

\begin{proof}
\begin{enumerate}
\item Let $I$ be a finite subset of objects in $\cC$ such that $F = \bigoplus_{b\in I }\hom_{\cC}(-,b)$. Then we have
$$S(F)= \bigoplus_{b\in I }S(\hom_{\cC}(-,b))=\bigoplus_{b\in I }\id_{b} \cdot \cA(\cC) $$
This shows that $S(F)$ is a quasi-free finitely generated  right $\cA(\cC)$-module.
\item  Suppose $P$ is a finitely generated projective right $\cC$-module. Then there exists a module $Q$ such that $P\oplus Q = F$, where $F$ is a free module. Moreover, we have $S(P)\oplus S(Q)=S(F)$, where $S(F)$ is quasi-free and finitely generated. Therefore, $S(P)$ is also projective. 
\item Suppose $M$ is a quasi-free finitely generated right $\cA(\cC)$-module. Then there exists a finite set $I$ such that $M=\bigoplus_{b\in I}\id_{b}\cdot \cA(\cC)$. Note that for any object $a$ in $\cC$,
$$
M_{\cC}(a)=M\cdot \id_{a} =(\bigoplus_{b\in I}\id_{b}\cdot \cA(\cC)) \cdot  \id_{a} = \bigoplus_{b\in I}\hom_{\cC}(a,b)$$
Therefore, we have
$$
M_{\cC}=\bigoplus_{b\in I}\hom_{\cC}(-,b)
$$
which is a free finitely generated module in  $\operatorname{Fun}(\cC^{op}, \operatorname{Ab})$. 
\item  Suppose $P$ is a projective finitely generated $\cA(\cC)$-module. Then there exists a module $Q$ such that $P\oplus Q = F$, where $F$ is a quasi-free finitely generated $\cA(\cC)$-module. We have
 $$P_{\cC}\oplus Q_{\cC} = F_{\cC}.$$
 Therefore, $P_{\cC}$ is also projective and finitely generated.
\end{enumerate}
\end{proof}

\begin{prop}\label{catring}
Let $\cC$ be a $\Z$-linear category with finitely many objects and $n\geq 1$.
\begin{enumerate}
\item The category $\cC$ is right Noetherian  if and only if $\cA(\cC)$  is a right Noetherian ring.
\item The category $\cC$ is right $n$-coherent if and only if $\cA(\cC)$ is a strong right  $n$-coherent ring. 
\item The category $\cC$ is regular $n$-coherent if and only if  $\cA(\cC)$ is a right $n$-regular and strong right $n$-coherent ring. 
\end{enumerate}
\end{prop}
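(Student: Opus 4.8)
The plan is to transport homological notions across the equivalence of Proposition~\ref{propeq}, whose two functors I denote $\cS$ and $(-)_{\cC}$. The key point is a single transport lemma: for every $F\in\operatorname{Fun}(\cC^{op},\operatorname{Ab})$ and every $0\le n\le\infty$, the object $F$ is of type $\mathcal{FP}_{n}$ if and only if $\cS(F)$ is a finitely $n$-presented $\cA(\cC)$-module, and $F$ has finite projective dimension if and only if $\cS(F)$ does, with the same value. Since $\cS$ is an exact equivalence of Grothendieck categories (Propositions~\ref{propeq} and \ref{propsc}), it induces isomorphisms on $\operatorname{Ext}$ and commutes with direct limits, hence preserves the intrinsically defined classes $\mathcal{FP}_{n}$ (characterised by $\operatorname{Ext}^{i}(-,-)$ preserving direct limits for $i\le n-1$) as well as projective objects, and therefore projective dimension. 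Concretely, by \cite[Corollary 2.14]{bgp} membership in $\mathcal{FP}_{n}$ is witnessed by a length-$n$ partial resolution by finitely generated projectives; Lemma~\ref{lempl} and Corollary~\ref{propfunc} show that $\cS$ and $(-)_{\cC}$ match finitely generated projectives on the two sides (free functors with quasi-free modules) and preserve epimorphisms and finite direct sums, so such a resolution transports to one of equal length, and likewise for a finite full projective resolution.

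Granting the transport lemma, each of the three equivalences becomes a comparison of the same homological condition read in the two categories. On the categorical side the reformulations are already recorded: $\cC$ is right Noetherian iff it is $0$-coherent, i.e. iff $\mathcal{FP}_{0}=\mathcal{FP}_{\infty}$ in $\operatorname{Fun}(\cC^{op},\operatorname{Ab})$; $\cC$ is right $n$-coherent iff $\mathcal{FP}_{n}=\mathcal{FP}_{\infty}$ there, by \cite[Theorem 4.7]{bgp}; and $\cC$ is regular $n$-coherent iff it is right $n$-coherent and every object of type $\mathcal{FP}_{n}$ has finite projective dimension (for a non-additive $\cC$ one reads these off $\cC_{\oplus}$ using $\operatorname{Fun}(\cC^{op},\operatorname{Ab})\simeq\operatorname{Fun}(\cC^{op}_{\oplus},\operatorname{Ab})$). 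On the ring side I would establish the matching statements: $\cA(\cC)$ is right Noetherian iff $\mathcal{FP}_{0}=\mathcal{FP}_{\infty}$ in \mbox{$\operatorname{Mod}$-$\cA(\cC)$}; $\cA(\cC)$ is right strong $n$-coherent iff $\mathcal{FP}_{n}=\mathcal{FP}_{\infty}$; and $\cA(\cC)$ is right $n$-regular iff every finitely $n$-presented module has finite projective dimension, the last being the definition. The three claims then follow at once, since the transport lemma identifies $\mathcal{FP}_{k}=\mathcal{FP}_{\infty}$ on the two sides ($k=0$ for (1), $k=n$ for (2)) and, for (3), further identifies ``every $\mathcal{FP}_{n}$ object has finite projective dimension'' with right $n$-regularity; thus regular $n$-coherence corresponds exactly to the conjunction of strong $n$-coherence and $n$-regularity.

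The step carrying real content is the ring-side reformulation of strong $n$-coherence: the single-step hypothesis that every finitely $n$-presented module be finitely $(n+1)$-presented already forces the full stabilisation $\mathcal{FP}_{n}=\mathcal{FP}_{\infty}$. I would prove this by a syzygy bootstrap. Writing $\Omega M$ for the kernel of a finitely generated projective cover $P_{0}\twoheadrightarrow M$, one has $M\in\mathcal{FP}_{k+1}$ iff $M\in\mathcal{FP}_{0}$ and $\Omega M\in\mathcal{FP}_{k}$. Strong $n$-coherence gives $\mathcal{FP}_{n}=\mathcal{FP}_{n+1}$; assuming inductively $\mathcal{FP}_{m}=\mathcal{FP}_{m+1}$ for some $m\ge n$, any $M\in\mathcal{FP}_{m+1}$ has $\Omega M\in\mathcal{FP}_{m}=\mathcal{FP}_{m+1}$, whence $M\in\mathcal{FP}_{m+2}$, so $\mathcal{FP}_{m+1}=\mathcal{FP}_{m+2}$ and finally $\mathcal{FP}_{n}=\mathcal{FP}_{\infty}$. (Equivalently, one may apply \cite[Theorem 4.7]{bgp} directly to the Grothendieck category \mbox{$\operatorname{Mod}$-$\cA(\cC)$}.) The same bootstrap at $n=0$, together with the standard fact that $R$ finitely generated and $R/I$ finitely presented force $I$ finitely generated, yields the Noetherian reformulation. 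Everything else is the formal transport assembled in the first paragraph; the finiteness of $\obc$ enters only to make $\cA(\cC)$ unital, which is exactly what lets us speak of finitely $n$-presented modules and apply Proposition~\ref{propeq}.
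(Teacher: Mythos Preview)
Your proposal is correct and rests on the same engine as the paper's proof: transport across the exact equivalence $\cS\dashv(-)_{\cC}$ of Propositions~\ref{propeq}--\ref{propsc}, using Lemma~\ref{lempl} to match finitely generated projectives on the two sides. The difference is organizational rather than mathematical. You package everything into a single transport lemma (the equivalence preserves $\mathcal{FP}_{k}$ and projective dimension) and then reduce all three parts uniformly to the statement ``$\mathcal{FP}_{k}=\mathcal{FP}_{\infty}$ on one side iff on the other''; the paper instead moves resolutions back and forth by hand in each direction, and for part~(1) works directly with the Noetherian definition (submodules of finitely generated modules are finitely generated) rather than passing through $\mathcal{FP}_{0}=\mathcal{FP}_{\infty}$. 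Your route is cleaner and in fact more careful on one point: in the converse of~(2) the paper silently uses that strong $n$-coherence (i.e.\ $\mathcal{FP}_{n}\subseteq\mathcal{FP}_{n+1}$) already gives an \emph{infinite} finitely generated projective resolution of any $\mathcal{FP}_{n}$ module, whereas you supply the syzygy bootstrap explicitly. The paper's hands-on argument has the minor advantage of never needing to state or prove the ring-side equivalence ``Noetherian $\Leftrightarrow\mathcal{FP}_{0}=\mathcal{FP}_{\infty}$'', but your uniform treatment makes the parallelism among the three parts transparent.
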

\begin{proof}
\begin{enumerate}
\item  Let $M$ be a finitely generated right $\cA(\cC)$-module and let $N$ be a submodule.  Consider the epimorphism
$$
\cA(\cC) \oplus \ldots \oplus \cA(\cC) \rightarrow M, 
$$
and let us apply Corollary \ref{propfunc} to obtain the following epimorphism:
$$
\cA(\cC)_{\cC} \oplus \ldots \oplus \cA(\cC)_{\cC} \rightarrow M_{\cC} .
$$
As $\cA(\cC)_{\cC} =\bigoplus_{b\in\obc}\hom_\cC(-,b)$ we obtain that $M_{\cC}$ is finitely generated. 
$$
\cA(\cC)_{\cC} \oplus \ldots \oplus \cA(\cC)_{\cC}  = \displaystyle\bigoplus_{j \in J \ b_{j}\in \obc } \hom_{\cC} (-,b_{j})
$$
Since $\cC$ is right Noetherian, we can conclude that $N_\cC$ is also finitely generated.  Moreover, there exists an epimorphism
$$
 \displaystyle\bigoplus_{i \in I} \hom_{\cC} (-,a_{i})\rightarrow N_{\cC}
$$ then 
$$
 \displaystyle\bigoplus_{i \in I} \cS (\hom_{\cC} (-,a_{i}))\rightarrow \cS (N_{\cC})=N.
$$ 
Consider the projection 
$$
p_{i}:  \cA (\cC) \rightarrow \cS (\hom_{\cC} (-,a_{i}))=\bigoplus_{c\in \obc}\hom_{\cC} (c,a_{i})
$$
Taking $n=\#I$ we obtain an epimorphism
$$
\cA (\cC)^{n} \rightarrow  \displaystyle\bigoplus_{i \in I} \cS (\hom_{\cC} (-,a_{i}))\rightarrow N,
$$ 
then $N$ is finitely generated. 

Conversely if $M\in \operatorname{Fun}(\cC^{op}, \operatorname{Ab})$ is finitely generated let us show that every subobject is also finitely generated.  Take $N$ as a submodule of $M$. 
There is an epimorphism
$$
 \displaystyle\bigoplus_{i \in I} \hom_{\cC} (-,a_{i})\rightarrow M 
$$
then we have an epimorphism 
$$
 \displaystyle\bigoplus_{i \in I, c\in\obc} \hom_{\cC} (c,a_{i})= \displaystyle\bigoplus_{i \in I} S(\hom_{\cC} (-,a_{i}))\rightarrow S(M). 
$$
We obtain that $S(N)$ is a submodule of $S(M)$ which is finitely generated, then $S(N)$ is also finitely generated and $S(N)_{\cC} =N$ is finitely generated. 
\item Let $M$ be a finitely $n$-presented right $\cA(\cC)$-module. Consider $m_{0}$, $m_{1}$,  $\ldots$, $m_{n}$ $\in \N$ such that 
$$\cA(\cC)^{m_{n}}\rightarrow \cA(\cC)^{m_{n-1}} \rightarrow \ldots \rightarrow \cA(\cC)^{m_{1}} \rightarrow \cA(\cC)^{m_{0}} \rightarrow M \rightarrow 0 $$ 
is exact. By Proposition \ref{propsc} the following is also an exact sequence
$$\cA(\cC)_{\cC}^{m_{n}}\rightarrow \cA(\cC)_{\cC}^{m_{n-1}} \rightarrow \ldots \rightarrow \cA(\cC)_{\cC}^{m_{1}} \rightarrow \cA(\cC)_{\cC}^{m_{0}} \rightarrow M_{\cC} \rightarrow 0 $$ 
As $\cA(\cC)_{\cC} =\bigoplus_{b\in\obc}\hom_\cC(-,b)$ we obtain that $M_{\cC}$ is of type $\mathcal{FP}_{n}$. Because $\cC$ is right $n$-coherent there exists an exact sequence
$$
\cdots \rightarrow  P_{n+1}\rightarrow  P_{n} \rightarrow \ldots \rightarrow P_{1}\rightarrow P_{0} \rightarrow M_{\cC} \rightarrow 0
$$
where each $P_{i}$ is both projective and finitely generated. Then, 
$$
\cdots \rightarrow  S(P_{n+1})\rightarrow S( P_{n}) \rightarrow \ldots \rightarrow S(P_{1})\rightarrow S(P_{0}) \rightarrow M \rightarrow 0
$$
is exact and by Lemma \ref{lempl} $S(P_{i})$ is projective and finitely generated. Therefore,  $\cA(\cC)$ is a strong right $n$-coherent ring. 

Conversely, if $F\in \operatorname{Fun}(\cC^{op}, \operatorname{Ab})$ is of type $\mathcal{FP}_{n}$ then $S(F)$ is an $\cA(\cC)$-module of the type $\mathcal{FP}_{n}$. 
As $\cA(\cC)$ is a strong  right $n$-coherent ring there exists $P_{i}$ projective finitely generated $\cA(\cC)$-modules such that
$$\ldots \rightarrow P_{n} \rightarrow \cdots \rightarrow P_{1} \rightarrow P_{0} \xrightarrow \ S(F) \rightarrow 0$$
Then 
$$ \ldots \rightarrow (P_{n})_{\cC} \rightarrow \cdots \rightarrow (P_{1})_{\cC} \rightarrow (P_{0})_{\cC} \rightarrow F \rightarrow 0$$
where $(P_{i})_{\cC}$ are projective and finitely generated by Lemma \ref{lempl}\eqref{cinco}.
\item Let $M$ be a finitely $n$-presented right $\cA(\cC)$-module. By the previous item, we know that   $M_{\cC}$ is of type $\mathcal{FP}_{n}$. 

Since $\cC$ is right $n$-regular, there exists an exact sequence
$$
0 \rightarrow  P_{k}\rightarrow  P_{k-1} \rightarrow \ldots \rightarrow P_{1}\rightarrow P_{0} \rightarrow M_{\cC} \rightarrow 0
$$
where each $P_i$ is a finitely generated projective module. Then, the sequence
$$
0 \rightarrow  S(P_{k})\rightarrow S( P_{k-1}) \rightarrow \ldots \rightarrow S(P_{1})\rightarrow S(P_{0}) \rightarrow M \rightarrow 0
$$

is exact, and by Lemma \ref{lempl}, $S(P_{i})$ is also finitely generated and projective. Therefore, $\cA(\cC)$ is a right $n$-regular and strong right $n$-coherent ring. The conversely is similar. 
\end{enumerate}
\end{proof}

\begin{ex} Let us consider some examples of $\Z$-linear categories with finitely many  objects. 

\begin{enumerate}
    \item Let $R$ be a ring and $G=\mathbb{Z}_{n}$. Consider $\tilde{R}=\frac{R[t]}{<t^{n}>}$. The category $\cC_{\tilde{R}}$ is the category with $n$ objects and 
$$
\operatorname{hom}_{\cC_{\tilde{R}}}(p,q)=\tilde{R}_{q-p}=R
$$
Note $\cA(\cC_{\tilde{R}})=M_{n\times n}(R)$. 
If $R$ is a Noetherian ring, then $\cA(\cC_{\tilde{R}})$ is also Noetherian.
By Proposition \ref{catring}, then $\cC_{\tilde{R}}$ is Noetherian. 

\item We recall from \cite{Costa} that a ring $R$ is said to be $(n,d)$-ring if every $n$-presented $R$-module has projective dimension at most $d$.
Remark that if $n\leq n'$ and $d\leq d'$, then every $(n,d)$-ring is also a $(n',d')$-ring.  

Let $R$, $S$ be a finite direct sum of fields and $\cC$ be the $\Z$-linear category with two objects $a$ and $b$ such that 
$\hom_{\cC}(a,b)=\hom_{\cC}(b,a)=0$, $\hom_{\cC}(a,a)=R$ and $\hom_{\cC}(b,b)=S$. Notice $\cA(\cC)=R\oplus S$, by 
\cite[Theorem 1.3 (i)]{Costa} $\cA(\cC)$ is a $(0,0)$-ring and hence a Noetherian and regular coherent ring. 

\item Let $G$ be a finite commutative group. An associative ring $R$ graded by $G$ is
$$R=\bigoplus_{g\in G} R_g$$
such that the multiplication satisfies $R_g · R_h \subseteq R_{g+h}$ for all $g,h \in G$. A (left) graded module over $R$ is an $R$-module $M$ together with a decomposition $M=\bigoplus_{g\in G} M_g$ such that $R_g M_h \subseteq M_{g+h}$. We denote by $R$-GrMod the category of graded $R$-modules. The category $\cC_R$ is the $\Z$-linear category whose set of objects is $\lbrace{g: g\in G}\rbrace$ and whose morphism groups are given by  $\hom_{\cC_R}(g,h)= R_{h-g}$. By \cite[Lemma 2.2]{DS} there is an equivalence between $R$-GrMod and the additive functor category $\operatorname{Fun}(\cC_R, \operatorname{Ab})$.

\end{enumerate}
\end{ex}

\section{K-theory of $\mathbb{Z}$-linear categories}\label{sec4}

\subsection{Vanishing negative K-theory} In this section, we have a result of vanishing negative K-theory of $\Z$-linear categories. 
Recall from \cite[Section 4]{ce} the definition of the K-theory spectrum of a $\mathbb{Z}$-linear category $\cC$, the K-theory spectrum of the ring $\cA(\cC)$ and the map 
\begin{equation}\label{nemap}
\varphi: K(\mathcal{C}) \rightarrow K(\cA(\cC))
\end{equation}
which is a natural equivalence in $\cC$, see \cite[Proposition 4.2.8]{ce}.

\begin{thm}\label{teofo}
Let $\cC$ be a $\Z$-linear category with finitely many objects. 
\begin{enumerate}
\item If $\cC$ is right regular, then $K_{i}(\cC)=0$  for all $i<0.$
\item If $\cC$ is right regular coherent, then $K_{-1}(\cC)=0$.
\end{enumerate}
\end{thm}

\begin{proof}
Assume that $\cC$ is a right regular category. Then, by Proposition \ref{catring}, $\cA(\cC)$ is a right regular ring. By the fundamental theorem of K-theory, we have $K_i(\cA(\cC)) = 0$ for all $i < 0$. It follows that
 $$ K_{i} (\cC) \simeq K_{i}(\cA(\cC))=0 \qquad \forall i<0.$$

Now, assume that $\cC$ is a right regular coherent category. Then $\cA(\cC)$ is a right regular coherent ring, by Proposition \ref{catring}. By \cite[Theorem 3.30]{ant}, we have $K_{-1}(\cA(\cC)) = 0$, and $$ K_{-1} (\cC) \simeq K_{-1}(\cA(\cC))=0.$$
\end{proof}

 \begin{cor} Let $\mathcal{D}=\cC_{\oplus}$ with $\cC$ be a $\Z$-linear category with finitely many objects. 
 \begin{enumerate}
\item If $\cC$ is right regular, then $K_{i}(\mathcal{D})=0$ for all $i<0$.
\item If $\cC$ is right regular coherent, then $K_{-1}(\mathcal{D})=0$.
\end{enumerate}
\end{cor}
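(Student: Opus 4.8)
The plan is to reduce the statement to Theorem \ref{teofo} for $\cC$ by producing a weak equivalence $K(\cC)\simeq K(\cC_\oplus)$. The point is that Theorem \ref{teofo} cannot be applied to $\mathcal{D}=\cC_\oplus$ directly: although $\cC$ has finitely many objects, the additive completion $\cC_\oplus$ has one object for every finite tuple, so $\obc_\oplus$ is infinite and $\cA(\cC_\oplus)$ is only a ring with local units. Proposition \ref{eq} guarantees that $\cC_\oplus$ is again right regular (resp. right regular coherent), but to reach the vanishing of negative K-theory one still has to transport it from $\cC$.

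First I would use the natural equivalence $\varphi$ of (\ref{nemap}), which is available for every $\Z$-linear category, to pass to the associated rings: $K(\cC)\simeq K(\cA(\cC))$ and $K(\cC_\oplus)\simeq K(\cA(\cC_\oplus))$. By the isomorphism $\cA(\cC_\oplus)\cong\bigoplus_{n,m\geq 1}M_{n\times m}(\cA(\cC))$ proved above, $\cA(\cC_\oplus)$ is the ring of finitely supported infinite block matrices over $\cA(\cC)$, which is Morita equivalent to $\cA(\cC)$. Since nonconnective algebraic K-theory is a Morita invariant, $K(\cA(\cC_\oplus))\simeq K(\cA(\cC))$, and therefore
$$K_i(\mathcal{D})=K_i(\cC_\oplus)\cong K_i(\cA(\cC))\cong K_i(\cC)$$
for all $i\in\Z$.

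It then remains only to apply Theorem \ref{teofo} to $\cC$ itself, which does have finite objects. In case (1), $\cC$ is right regular, so $K_i(\cC)=0$ for all $i<0$; in case (2), $\cC$ is right regular coherent, so $K_{-1}(\cC)=0$. Transporting these vanishings along the isomorphism above gives $K_i(\mathcal{D})=0$ for all $i<0$ and $K_{-1}(\mathcal{D})=0$ respectively.

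The delicate step is the Morita invariance $K(\cA(\cC_\oplus))\simeq K(\cA(\cC))$, precisely because $\cA(\cC_\oplus)$ is non-unital and the classical Morita statement is for unital rings. This should be handled by the same machinery used throughout: the equivalences $\operatorname{Fun}(\cC^{op},\operatorname{Ab})\simeq\operatorname{Fun}(\cC_\oplus^{op},\operatorname{Ab})$ together with Proposition \ref{propeq} identify $\operatorname{Mod}$-$\cA(\cC)$ with $\operatorname{Mod}$-$\cA(\cC_\oplus)$, and the functors $\cS$ and $(-)_\cC$ preserve finitely generated projectives (Lemma \ref{lempl}); this is exactly what is needed to compare the two K-theory spectra, and is what I would make precise in place of an off-the-shelf Morita invariance theorem.
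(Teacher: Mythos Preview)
The paper states this corollary without proof, so there is no explicit argument to compare against; your strategy of reducing to Theorem~\ref{teofo} for $\cC$ via an identification $K_i(\cC_\oplus)\cong K_i(\cC)$ is exactly what the placement of the corollary suggests, and your argument is correct.

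One simplification worth noting: the ``delicate step'' you flag can be avoided entirely. The embedding $\cC\hookrightarrow\cC_\oplus$ induces an equivalence $\operatorname{Idem}(\cC_\oplus)\simeq\operatorname{Idem}((\cC_\oplus)_\oplus)$, and the nonconnective K-theory spectrum of a $\Z$-linear category in the sense of \cite[Section~4]{ce} is invariant under passage to $\cC_\oplus$ (equivalently, it factors through the idempotent completion of the additive hull). Hence $K(\cC)\simeq K(\cC_\oplus)$ holds for every small $\Z$-linear category, with no appeal to Morita invariance for the non-unital ring $\cA(\cC_\oplus)$. Your route through $\cA(\cC_\oplus)\cong\bigoplus_{n,m}M_{n\times m}(\cA(\cC))$ together with Proposition~\ref{propeq} and Lemma~\ref{lempl} also works and in fact \emph{proves} that Morita statement, but it is more machinery than the corollary needs.
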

 
\begin{defn} A $\Z$-linear category $\cC$ is right {\emph{AF-regular}} if there is $\{ \cC_{f}\}_{f\in F}$ a direct system of right regular $\Z$-linear categories with finitely many  objects such that 
$$
\cC = \colim_{f\in F} \cC_{f} 
$$
Similarly, we say that $\cC$ is right {\emph{AF-Noetherian}} ({\emph{AF-regular coherent})} if 
$$
\cC = \colim_{f\in F} \cC_{f} 
$$
with $\cC_{i}$ being directed systems of right Noetherian (regular coherent) $\Z$-linear categories with finitely many objects. 
\end{defn}

\begin{thm}\label{teoaf}
Let $\cC$ be a $\Z$-linear category. 
\begin{enumerate}
\item If $\cC$ is right AF-regular, then $K_{i}(\cC)=0$ $\forall i<0$.
\item If $\cC$ is right AF-regular coherent, then $K_{-1}(\cC)=0$.
\end{enumerate}
\end{thm}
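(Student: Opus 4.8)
The plan is to reduce to Theorem \ref{teofo} via the \emph{continuity} of algebraic $K$-theory, i.e. the fact that it commutes with filtered colimits. The point is that the colimit $\cC=\colim_{f\in F}\cC_f$ need not have finite objects, so Theorem \ref{teofo} cannot be applied to $\cC$ directly; instead one transports the vanishing from each $\cC_f$ through the colimit.

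First I would check that the assignment $\cC\mapsto\cA(\cC)$ preserves filtered colimits. For $\cC=\colim_{f\in F}\cC_f$ the objects and the hom-groups of $\cC$ are the filtered colimits of those of the $\cC_f$, and since the direct sum in \eqref{ac} commutes with filtered colimits one obtains a natural isomorphism of rings with local units
$$\cA(\cC)\cong\colim_{f\in F}\cA(\cC_f).$$
As each $\cC_f$ has finite objects, every $\cA(\cC_f)$ is unital, so $\cA(\cC)$ is exhibited as a filtered colimit of unital rings.

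Next, since each $\cC_f$ is right regular (resp. right regular coherent) with finite objects, Theorem \ref{teofo} gives $K_i(\cC_f)=0$ for all $i<0$ (resp. $K_{-1}(\cC_f)=0$), and hence, through the natural equivalence $\varphi$ of \eqref{nemap}, $K_i(\cA(\cC_f))=0$ in the same range. Invoking continuity of $K$-theory—Quillen's theorem that $K_i$ commutes with filtered colimits of rings—applied to the system $\cA(\cC_f)$ yields
$$K_i(\cC)\cong K_i(\cA(\cC))=K_i\bigl(\colim_{f\in F}\cA(\cC_f)\bigr)\cong\colim_{f\in F}K_i(\cA(\cC_f)).$$
Every term on the right vanishes in the relevant range, so the colimit does too, proving (1) and (2).

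The step I expect to require the most care is continuity. Quillen's result is classically phrased for unital rings, while here the colimit $\cA(\cC)$ is only a ring with local units; one must verify that the $K$-theory of rings with local units used in \cite{ce} is indeed continuous. A clean alternative is to argue at the spectrum level: establish $K(\cC)\simeq\hoco_{f\in F}K(\cC_f)$ directly, using that the $K$-theory functor for $\Z$-linear categories preserves filtered homotopy colimits, and then pass to homotopy groups, which commute with filtered homotopy colimits. This route avoids the unitality issue entirely, at the cost of checking the preservation property for the functor of \cite{ce}.
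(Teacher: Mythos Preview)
Your argument is correct and follows the same strategy as the paper: reduce to Theorem~\ref{teofo} via continuity of $K$-theory along the filtered system $\{\cC_f\}$. The paper's proof is a one-liner asserting $K_i(\cC)=\colim_{f\in F}K_i(\cC_f)$ directly at the level of $\Z$-linear categories and then invoking Theorem~\ref{teofo}; your detour through $\cA(\colim\cC_f)\cong\colim\cA(\cC_f)$ and continuity for rings reaches the same conclusion, and your ``clean alternative'' of arguing continuity for the $K$-theory of $\Z$-linear categories at the spectrum level is exactly what the paper takes for granted.
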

\begin{proof}
Assume that  $\cC = \colim_{f\in F} \cC_{f}$. Using the continuity of $K$-theory, we have $K_{i}(\cC)=  \colim_{f\in F}K_{i} (\cC_{f})$ for all $i<0$. The rest of the proof follows from Theorem \ref{teofo}. 
\end{proof}

\subsection{Fundamental Theorem and homotopy invariance}
Let $\cC$ be a $\Z$-linear category. We consider the category $\cC[t]$ with the same objects of $\cC$ and morphisms are
$$
\hom_{\cC[t]}(a,b)= \{\sum_{i=0}^{n} f_{i}t^{i}: \quad n\in \N\, \quad f_{i}\in \hom_{\cC}(a,b) \}.
$$
Let us also consider the category $\cC[t,t^{-1}]$ with the same objects of $\cC$ and morphisms are
$$
\hom_{\cC[t,t^{-1}]}(a,b)= \{\sum_{i=-n}^{n} f_{i}t^{i}: \quad n\in \N\, \quad f_{i}\in \hom_{\cC}(a,b) \}.
$$
\begin{rem}\label{elremark}
    If $\cC$ is a $\Z$-linear category then $\cC[t]$ and $\cC[t,t^{-1}]$ are $\Z$-linear categories and 
    $$
    \cA{(\cC[t])}\cong \cA(\cC)[t] \qquad \cA{(\cC[t,t^{-1}])}\cong \cA(\cC)[t,t^{-1}].
    $$
\end{rem}
\begin{thm}
    Let $\cC$ be a right regular coherent $\Z$-linear category with finitely many objects then 
    $$
    K_{i}(\cC)\cong K_{i}(\cC[t]) \qquad K_{i+1}(\cC[t,t^{-1}]) \cong K_{i+1}(\cC)\oplus K_{i}(\cC) \qquad i\geq 0.
    $$
\end{thm}
\begin{proof}
    Because \eqref{nemap} is a weak equivalence 
   then  $K_{i}(\cC)\cong K_{i}(\cA(\cC))$. Observe that $\cA(\cC)$ is a right regular coherent ring, by Proposition \ref{catring}. Using \cite[Cor 5.3]{Swan} and \cite[Thm 6.1]{Swan} we obtain 
   $$ K_{i}(\cA(\cC)) \cong  K_{i}(\cA(\cC)[t])$$ for $i\geq 0$. By Remark \ref{elremark} and using again that \eqref{nemap} is a weak equivalence we obtain 
   $$
   K_{i}(\cC)\cong K_{i}(\cA(\cC)) \cong K_{i}(\cA(\cC)[t]) \cong K_{i}(\cA(\cC[t]))\cong K_{i}(\cC[t]) \quad i\geq 0
   $$
   Similarly 
   $$
   \begin{array}{lclr}
   K_{i+1}(\cC[t,t^{-1}])&\cong & K_{i+1}(\cA(\cC[t,t^{-1}])) & \\
   \\
   &\cong & K_{i+1}(\cA(\cC)[t,t^{-1}])\\ 
    \\
   &\cong & K_{i+1}(\cA(\cC))\oplus K_{i}(\cA(\cC)) & \mbox{by \cite[Thm 6.1]{Swan}}
   \\
   \\
   &\cong & K_{i+1}(\cC)\oplus K_{i}(\cC) & i\geq 0
   \end{array}
   $$
\end{proof}

\begin{cor}
    Let $\cC$ be a right AF-regular coherent $\Z$-linear category then 
    $$
    K_{i}(\cC)\cong K_{i}(\cC[t]) \qquad K_{i+1}(\cC[t,t^{-1}]) \cong K_{i+1}(\cC)\oplus K_{i}(\cC) \qquad i\geq 0
    $$
\end{cor}

Denote by $\Z$-$\operatorname{Cat}$ to the category of $\Z$-linear categories.  Consider $\cF$ a full subcategory of $\Z$-$\operatorname{Cat}$.
A functor $F: \Z$-$\operatorname{Cat} \rightarrow \cD$ is $\cF$-{\emph{homotopy invariant}} if $$F(\iota): F(\cC)\rightarrow F(\cC[t])$$ is an isomorphism for every $\cC$ in $\cF$.

\begin{cor}
    Let $\cF$ the full subcategory of right AF-regular coherent $\Z$-linear categories. Then $K_{i}$ is $\cF$-homotopy invariant for $i\geq 0$.
\end{cor}

Using \cite[Thm 3.2]{ep} and Proposition \ref{catring} we obtain the following result.
\begin{prop}\label{generaln}
Let $\cC$ be a $\Z$-linear category with finitely many objects. Suppose that $\cC$ is right regular $n$-coherent. Then 

$$
K_{i}(\cC)\simeq K_{i}(\mathcal{FP}_{n}(\cA(\cC))) \quad i\geq 0.
$$
\end{prop}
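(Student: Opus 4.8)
The plan is to reduce the assertion to the ring-theoretic computation \cite[Thm 3.2]{ep}, using Proposition \ref{catring} as the bridge between $\cC$ and $\cA(\cC)$, and the K-theoretic comparison \eqref{nemap} to transport the answer back to $\cC$. Since $\cC$ has finite objects, $\cA(\cC)$ is a ring with unit, so the cited results for unital rings are available.

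First I would apply Proposition \ref{catring}(3): as $\cC$ is a $\Z$-linear category with finite objects which is right regular $n$-coherent, the ring $\cA(\cC)$ is right $n$-regular and strong $n$-coherent with unit. This is exactly the hypothesis of \cite[Thm 3.2]{ep}, which I expect to yield a natural equivalence
$$ K_i(\cA(\cC)) \simeq K_i(\mathcal{FP}_n(\cA(\cC))), \qquad i \geq 0, $$
where $\mathcal{FP}_n(\cA(\cC))$ is the exact category of finitely $n$-presented right $\cA(\cC)$-modules; strong $n$-coherence is precisely what makes this class of modules closed under the kernels and extensions needed for its K-theory to be defined.

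Next I would invoke the natural equivalence $\varphi\colon K(\cC)\to K(\cA(\cC))$ of \eqref{nemap}, established in \cite[Proposition 4.2.8]{ce}, which induces isomorphisms $K_i(\cC)\simeq K_i(\cA(\cC))$ for every $i$. Composing the two identifications gives
$$ K_i(\cC)\simeq K_i(\cA(\cC))\simeq K_i(\mathcal{FP}_n(\cA(\cC))),\qquad i\geq 0, $$
which is the claim.

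Rather than a genuine obstacle, the one point to verify is that the hypotheses align exactly: \cite[Thm 3.2]{ep} is formulated for rings that are simultaneously $n$-regular and strong $n$-coherent, and Proposition \ref{catring}(3) produces precisely this conjunction from right regular $n$-coherence of $\cC$. The restriction to $i\geq 0$ is inherited from \cite[Thm 3.2]{ep}, since $K(\mathcal{FP}_n(\cA(\cC)))$ is a connective spectrum.
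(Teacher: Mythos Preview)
Your proposal is correct and follows precisely the route indicated in the paper: apply Proposition \ref{catring}(3) to pass from the hypothesis on $\cC$ to the ring-theoretic hypothesis on $\cA(\cC)$, invoke \cite[Thm 3.2]{ep}, and use the equivalence \eqref{nemap} to return to $K_i(\cC)$. The paper states this argument in a single sentence preceding the proposition, and your expansion fills in exactly those steps.
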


\section*{Acknowledgments}

The first author was partially supported by ANII. Both authors were partially
supported by PEDECIBA, CSIC and by the grant ANII FCE-3-2018-1-148588. We
would like to thank Willie Cortiñas and Carlos E. Parra for their interesting comments and discussions. We also thank to the referee for their corrections and comments.

\bibliographystyle{plain}

\end{document}